\theoremstyle{plain}
\newtheorem{theorem}{Theorem}
\newtheorem{conjecture}[theorem]{Conjecture}
\newtheorem{lemma}[theorem]{Lemma}
\newtheorem{corollary}[theorem]{Corollary}
\theoremstyle{definition}
\newtheorem{problem}[theorem]{Problem}
\theoremstyle{remark}
\newtheorem*{remark*}{Remark}
\definecolor{ffqqqq}{rgb}{1.,0.,0.}
\DeclareMathOperator{\cone}{cone}
\newcommand{\ncrlsetcard}[1]{|DJ_\ell(#1)|}
\title[Geometric graphs]{{Disjoint edges in geometric graphs}}
\author[N.~Chernega, A.~Polyanskii, R.~Sadykov]{{Nikita~Chernega, Alexandr~Polyanskii, Rinat~Sadykov}}
\address{Nikita Chernega, \newline\hphantom{iii} Moscow Institute of Physics and Technology, Institutskiy per. 9, Dolgoprudny, Russia 141700
}
\email{\href{mailto:chernega_nikita@mail.ru}{chernega\_nikita@mail.ru}}
\address{Alexandr Polyanskii,
\newline\hphantom{iii} Moscow Institute of Physics and Technology, Institutskiy per. 9, Dolgoprudny, Russia 141700
}
\email{\href{mailto:alexander.polyanskii@yandex.ru}{alexander.polyanskii@yandex.ru}}
\urladdr{\url{http://polyanskii.com}}
\address{Rinat Sadykov,
\newline\hphantom{iii} Moscow Institute of Physics and Technology, Institutskiy per. 9, Dolgoprudny, Russia 141700
}
\email{\href{mailto:sadykov@phystech.edu}{sadykov@phystech.edu}}
\keywords{Geometric graphs, disjoint edges, pointed vertex}
\subjclass[2010]{05C62}
\begin{document}
%\nocite{*}

\thispagestyle{empty}

\begin{abstract}
A \textit{geometric graph} is a graph drawn in the plane so that its vertices and edges are represented by points in general position and straight line segments, respectively. A vertex of a geometric graph is called \textit{pointed} if it lies outside of the convex hull of its neighbours.  We show that for a geometric graph with \(n\) vertices and \(e\) edges there are at least \(\frac{n}{2}\binom{2e/n}{3}\) pairs of disjoint edges provided that \(2e\geq n\) and all the vertices of the graph are pointed. Besides, we prove that if any edge of a geometric graph with \(n\) vertices is disjoint from at most \( m \) edges, then the number of edges of this graph does not exceed \(n(\sqrt{1+8m}+3)/4\) provided that \(n\) is sufficiently large.

These two results are tight for an infinite family of graphs.
\end{abstract}

\maketitle

\section{Introduction}

A \textit{geometric graph} \(G\) is a graph drawn in the plane by (possibly crossing) straight line segments, that is, its vertex set \(V(G)\) is a set of points in general position in the plane and its edge set \(E(G)\) is the set of straight line segments with endpoints belonging to \(V(G)\). One of the classical problems on geometric graphs is a question raised by Avital and Hanani~\cite{avital1966graphs}, Kupitz~\cite{Kupitz79}, Erd\H os and Perles: For positive integers \(k\) and \(n\), determine the smallest \(e_k(n)\) such that any geometric graph with \(n\) vertices and \(m>e_k(n)\) edges contains \(k+1\) pairwise disjoint edges.

By results of Hopf and Pannwitz~\cite{hopf1934} and Erd\H os~\cite{Erdos46}, we know that \(e_1(n)=n\). The upper bound for \(e_2(n)\) was studied in papers of Alon and Erd\H os~\cite{AlonErdos89}, Goddard, Katchalski, and Kleitman~\cite{GoddardKatchalskiKleitman96}, M\'esz\'aros~\cite{meszaros98}. The current best upper bound \(e_2(n)\leq \lceil 5n/2\rceil\) was proved by \v{C}ern\'{y}~\cite{Cerny05}. This bound is tight up to additive constant: Perles found an example showing that \(e_2(n)\geq \lfloor 5n/2 \rfloor -3\). Also, in~\cite{GoddardKatchalskiKleitman96} it was shown that \({7n/2-6\leq e_3(n)\leq 10n}\). For \(k\leq n/2\), Kupitz~\cite{Kupitz79} proved the lower bound \(e_k(n)\geq kn\), and later T\'oth and Valtr~\cite{TothValtr99} improved it: \({e_k(n)\geq 3(k-1)n/2-2k^2}\). 
Using Dilworth's theorem, Pach and  T\"or\H{o}csik~\cite{PachTorocsik94} found a beautiful proof of the upper bound~\(e_k(n)\leq k^4 n\). Later, this bound was refined in~\cite{TothValtr99}, and the current best upper bound \(e_k(n)\leq 256k^2n\) belongs to T\'oth~\cite{Toth00}; see also~Theorem~1.11 in~\cite{felsner2012geometric}. Another interesting result about disjoint edges of a \textit{convex graph} is due to Kupitz~\cite{Kupitz79}. Recall that \textit{a convex graph} is a geometric graph whose vertices are in convex position. He proved that if a convex graph on \( n \) vertices has no \(k+1\) pairwise disjoint edge, then its number of edges does not exceed \(kn\) provided \(n\geq 2k+1\). Keller and Perles~\cite{keller2012smallest} studied the case $n=2k+2$ and gave the exact characterization of the extremal configurations (that is, with the maximum number of edges); see~Theorem~1.5 in their paper, which is stated in terms of the so-calling blocking sets. For further reading, we refer the interested readers to the survey of Pach~\cite{pach2013beginnings} on geometric and topological graphs.

All these classical results are about \textit{how many edges in a geometric graph on \(n\) vertices guarantee \( k+1 \) disjoint edges}. Motivated by them, we focus on the case  \( k=1 \) and study \textit{how many edges in a geometric graph on \(n\) vertices guarantee a lot of pairs of disjoint edges}.

To state the concrete problems, we introduce the following notation. For a geometric graph \( G \), denote by \(DJ(G)\) the set of pairs of disjoint edges of geometric graphs. For an edge \(uv\in E(G)\), let \(DJ(uv)\) be the set of edges in \(G\) disjoint from \(uv\). Clearly, we have the equality
\[
|DJ(G)| = \frac12 \sum_{uv\in E(G)} |DJ(uv)|.
\]

These are the main problems of the paper.
\begin{problem}
    For integers \(n>0\) and \(m\geq0\), determine the greatest number \(e(n,m)\) such that a geometric graph \(G\) on \(n\) vertices has at most \(e(n,m)\) edges provided that \(|DJ(uv)|\leq m\) for any \(uv\in E(G)\).
\end{problem}

\begin{problem}
    For positive integers \(n\) and \(e\), determine the smallest number \(dj(n,e)\) such that for any geometric graph \(G \) with \(n\) vertices and \(e\) edges, we have \(|DJ(G)|\geq dj(n,e)\).
\end{problem}

In particular, we study these problems for  \textit{pointed graphs}. 
To define these graphs, recall that the \textit{neighbourhood} \(N(v)\) of a vertex \(v\in V(G)\) is the set of vertices adjacent to~\( v \). A vertex is called \textit{pointed} if it lies outside of the convex hull of its neighbourhood, otherwise, it is called \textit{cyclic}. A \textit{pointed graph} is a geometric graph such that any of its vertices is pointed; see Figure~\ref{figure: mathcal L and mathcal R}, where a pointed graph is drawn. Clearly, any convex graph is pointed as well.

Also, for positive integer $k$ and any $\alpha\in \mathbb R$, we use the standard notation of binomial coefficient
\[
    \binom{\alpha}{k} : = \frac{\alpha (\alpha-1) \dots (\alpha - k + 1)}{k!}.
\]
The goal of this paper is to prove the following two theorems.
\begin{theorem}\label{theorem:main1}
Let \(m\) be a non-negative integer and \(G\) be a geometric graph such that \( |DJ(uv)|\leq m\) for any edge \( uv\in E(G) \). Then
\[
|E(G)|\leq \max\left(|V(G)|\left(\sqrt{1+8m}+3\right)/4, |V(G)| + 3m-1\right).
\] 

\end{theorem}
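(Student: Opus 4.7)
Writing $n := |V(G)|$, $e := |E(G)|$, and $d := 2e/n$ for the average degree, a short algebraic manipulation shows that the target bound $e \le n(\sqrt{1+8m}+3)/4$ is equivalent to $(d-1)(d-2)\le 2m$, i.e., $\binom{d-1}{2}\le m$. Since $x\mapsto\binom{x-1}{2}$ is a convex quadratic, Jensen's inequality reduces this to the vertex-level estimate
\[
\sum_{v\in V(G)}\binom{d(v)-1}{2}\;\le\; m\,n. \qquad (\ast)
\]
Note that the naive approach of combining the companion lower bound $|DJ(G)|\ge \frac{n}{2}\binom{2e/n}{3}$ announced in the abstract with the trivial estimate $|DJ(G)|\le me/2$ yields only the weaker relation $(d-1)(d-2)\le 3m$, so proving $(\ast)$ requires a more refined argument.

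My strategy for $(\ast)$ is a local geometric argument at each vertex, using the locally convex hypothesis indicated in the paragraph preceding the theorem. At each vertex $v$, the neighbors $u_1,\dots,u_{d(v)}$ admit a linear angular ordering since they lie in an open half-plane bounded by a line through $v$. For the extreme neighbor $u_1$, every chord $u_iu_j$ with $2\le i<j\le d(v)$ lies strictly on one side of the line through $v$ and $u_1$, so the edge $vu_1$ is geometrically disjoint from every such chord. This yields
\[
|DJ(vu_1)| \;\ge\; \#\bigl\{u_iu_j\in E(G):2\le i<j\le d(v)\bigr\},
\]
and the per-edge hypothesis $|DJ(vu_1)|\le m$ then bounds the number of $G$-edges inside $N(v)\setminus\{u_1\}$ by $m$. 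Summing a symmetric version of this inequality over all vertices (treating both angular extremes, or averaging over the choice of extreme neighbor) and charging each chord $u_iu_j$ that fails to be an edge to the degree deficits at $u_i$ and $u_j$ should yield $(\ast)$. This balancing is the main technical obstacle: one needs to argue that a missing chord simultaneously reduces $\binom{d(u_i)-1}{2}+\binom{d(u_j)-1}{2}$ by enough to compensate for the local deficit at $v$. In the extremal configuration, a vertex-disjoint union of convex $(d+1)$-cliques with $m=\binom{d-1}{2}$, every chord is present and the inequality is tight at every vertex, which is consistent with the exact constants $8m$ and $3$ appearing in the announced bound.

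The secondary term $n+3m-1$ in the statement covers degenerate regimes in which Jensen's step is lossy (for instance, when a few high-degree vertices dominate the edge count, so the degree sequence is far from constant). I would establish it by a direct counting argument using the identity
\[
|DJ(uv)|+C(uv) = e - d(u) - d(v) + 1,
\]
where $C(uv)$ denotes the number of edges of $G$ crossing $uv$, applied to a suitably chosen edge; alternatively, it can serve as the base case of an induction on $n$ that complements the main argument above.
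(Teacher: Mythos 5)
Your opening reduction is sound: \(e\le n(\sqrt{1+8m}+3)/4\) is equivalent to \((d-1)(d-2)\le 2m\), which is exactly what the paper proves in its locally convex case, and your extreme-neighbor observation (the edge \(vu_1\) to an angularly extreme neighbor is disjoint from every chord of \(N(v)\setminus\{u_1\}\) that is an edge of \(G\)) is correct. The fatal problem is that your intermediate estimate \((\ast)\), \(\sum_{v}\binom{d(v)-1}{2}\le mn\), is simply false, so no charging scheme can establish it. Take a star \(S_k\), \(k\ge 3\), drawn with the center outside the convex hull of its leaves: this is locally convex and in general position, every two edges share the center, so \(|DJ(uv)|=0\) for every edge and the hypothesis holds with \(m=0\), while \(\sum_v\binom{d(v)-1}{2}=\binom{k-1}{2}>0\). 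The missing chords at the center have nowhere to be charged: the leaves have degree \(1\), so their terms are already \(0\) and admit no further deficit. This is the same obstruction the paper itself records in its Discussion, where it shows Theorem~\ref{theorem:main2} becomes false if \(\frac{|V(G)|}{2}\binom{d(G)}{3}\) is replaced by the degree-sequence sum \(\frac12\sum_v\binom{\deg v}{3}\) (two disjoint stars); any per-vertex strengthening of an average-degree bound fails in exactly this way, which is why the step you flag as ``the main technical obstacle'' is not technical but impossible. The paper avoids the trap by proving a different per-vertex inequality: Lemma~\ref{lemma:simple?} bounds \(|DJ_\ell(v)|+|DJ_r(v)|\) from below by \(\sum_{w\in N(v)\setminus\{\ell_v,r_v\}}(\deg w-1)\) plus correction terms, i.e., it counts edges \emph{incident to} the neighbors of \(v\) (including edges leaving \(N(v)\)), not chords \emph{inside} \(N(v)\). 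Summed over \(v\), each vertex \(w\) then contributes roughly \(\deg w(\deg w-1)\), so weight is automatically transferred from high-degree vertices to their neighbors, and a single global Cauchy--Schwarz step (Corollaries~\ref{maincorollary} and~\ref{theorem1corollary}) gives \(\sum_v\big(|DJ(v\ell_v)|+|DJ(vr_v)|\big)\ge n(d-1)(d-2)\), which together with \(|DJ(v\ell_v)|,|DJ(vr_v)|\le m\) yields \((d-1)(d-2)\le 2m\). Convexity of \(x\mapsto x^2\) is exploited only on this global sum, never as a vertex-by-vertex claim like \((\ast)\).

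Your reading of the second term \(n+3m-1\) is also off. Theorem~\ref{theorem:main1} carries no locally convex hypothesis (the preceding paragraph of the paper merely defines the notion), so a complete proof must handle graphs having a non-convex vertex, which your argument --- relying at every vertex on an angular ordering of \(N(v)\) within a half-plane --- never does. In the paper, the second term exists precisely for this case: if \(v\) lies in the convex hull of three neighbors \(v_1,v_2,v_3\), then every edge not incident to \(v\) is disjoint from at least one of \(vv_1,vv_2,vv_3\), whence \(e\le 3m+\deg v\le 3m+n-1\). It has nothing to do with ``Jensen being lossy'' for irregular degree sequences; for locally convex graphs the first bound \(e\le n(\sqrt{1+8m}+3)/4\) holds no matter how irregular the degrees are (and indeed the star above satisfies it comfortably --- it violates only your \((\ast)\)). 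Your identity \(|DJ(uv)|+C(uv)=e-\deg u-\deg v+1\) is true, but you give no derivation of \(n+3m-1\) from it, and your dichotomy (``Jensen-friendly'' versus ``degenerate'' degree sequences) is not the right one and comes with no criterion or proof on either side; the correct dichotomy is geometric: locally convex versus not.
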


\begin{theorem}\label{theorem:main2}
For a pointed graph \( G \) with \( 2|E(G)|\geq |V(G)| \), we have
\[
|DJ(G)| \geq \frac{|V(G)|}{2} \cdot \binom{d(G)}{3},
\]
where \(d(G)=2|E(G)|/|V(G)|\) is the average degree of \(G\).
\end{theorem}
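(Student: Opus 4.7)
My plan is to combine a geometric wedge lemma at each convex vertex with an aggregate double-counting argument, and to close via Jensen's inequality applied to $x\mapsto\binom{x}{3}$, which is convex on $[1,\infty)$.

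\emph{Wedge lemma.} Fix a convex vertex $v$. Since $v\notin \mathrm{conv}(N(v))$, the neighbors of $v$ lie in an open half-plane $H^+$ bounded by a line through $v$, and admit a linear order $u_1,\ldots,u_{d_v}$ by angle as seen from $v$. Along any segment $ab$ whose supporting line avoids $v$, the angle at $v$ is a monotone function of the parameter (its derivative has constant sign equal to that of $\det(a-v,b-a)$), so the angular range of $ab$ from $v$ is the interval $[\min(\alpha_a,\alpha_b),\max(\alpha_a,\alpha_b)]$. Hence, if both endpoints $a,b\in H^+$ lie angularly on the same side of $u_i$ at $v$, the segment $ab$ does not meet $vu_i$, so $ab\in DJ(vu_i)$. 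This yields
\[
|DJ(vu_i)|\ \geq\ E^-_i(v)+E^+_i(v),
\]
where $E^\pm_i(v)$ counts the edges of $G$ lying entirely in the wedge before (respectively after) $u_i$ at $v$.

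\emph{Aggregation.} Summing over all edges incident to each vertex,
\[
2|DJ(G)|\ =\ \sum_{vu\in E(G)}|DJ(vu)|\ \geq\ \sum_{v\in V(G)}\sum_{i=1}^{d_v}\bigl(E^-_i(v)+E^+_i(v)\bigr).
\]
I would rewrite the right-hand side by swapping the order of summation: first over edges $ab\in E(G)$, then over pairs $(v,u_i)$ for which $a,b\in H^+(v)$ and $u_i\in N(v)$ lies angularly outside the interval between $a$ and $b$. When $a,b\in N(v)$, the number of admissible $u_i$ equals $d_v-(l-j+1)$ where $u_j,u_l$ are the angular positions of $a,b$ at $v$, and summing this over pairs $\{u_j,u_l\}\subseteq N(v)$ telescopes via the identity $\sum_{1\leq j<l\leq d_v}(d_v-l+j-1)=2\binom{d_v}{3}$. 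Jensen's inequality $\sum_v\binom{d_v}{3}\geq n\binom{d}{3}$, valid after discarding isolated vertices, then closes the argument.

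\emph{Main obstacle.} The delicate point is that the wedge count at a single vertex $v$ may fall short of $2\binom{d_v}{3}$ when $G[N(v)]$ is sparse (as in a star augmented with a matching on its leaves), so the per-vertex inequality $\sum_{u\in N(v)}|DJ(vu)|\geq 2\binom{d_v}{3}$ need not hold. Hence the aggregation must be genuinely global: each pair of disjoint edges must be distributed among the wedge counts at several vertices, and the bookkeeping must combine the wedge lemma with local convexity at \emph{every} endpoint of every edge so that the total still dominates $n\binom{d}{3}$. I expect this balancing step to be the main technical hurdle; the identity recovering $2\binom{d_v}{3}$ in the dense regime (which is tight for convex $K_n$, where $n\binom{n-1}{3}=4\binom{n}{4}=4|DJ(G)|$) will serve as the guiding template for the general counting.
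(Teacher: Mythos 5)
There are two genuine problems with your plan, one a concrete false step and one an unfilled hole that is in fact the whole content of the theorem.

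First, the aggregation inequality is off by a factor of two, and not harmlessly so. The identity \(2|DJ(G)|=\sum_{uv\in E(G)}|DJ(uv)|\) counts each edge of \(G\) once, but your right-hand side \(\sum_{v}\sum_{i=1}^{d_v}\bigl(E^-_i(v)+E^+_i(v)\bigr)\) runs over incident (vertex, edge) pairs and therefore visits every edge twice; the correct chain is \(4|DJ(G)|=\sum_{v}\sum_{i=1}^{d_v}|DJ(vu_i)|\geq \sum_{v}\sum_{i=1}^{d_v}\bigl(E^-_i(v)+E^+_i(v)\bigr)\). As written, your inequality already fails for the convex \(K_4\): there \(|DJ(G)|=2\), so your left side is \(4\), while the wedge counts sum to \(8\) (each vertex contributes \(2\)). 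The factor matters, because with your constant the dense regime would yield \(|DJ(G)|\geq \sum_v\binom{\deg v}{3}\), which is false for the convex \(K_n\): there \(|DJ(G)|=2\binom{n}{4}\) (not \(\binom{n}{4}\), as your final sanity check asserts), while \(\sum_v\binom{\deg v}{3}=4\binom{n}{4}\). Indeed, the paper's Discussion section exhibits two disjoint convex stars showing that even \(\frac12\sum_v\binom{\deg v}{3}\) is not a valid lower bound for \(|DJ(G)|\) in general. With the corrected factor \(4\), your dense-case computation gives exactly the claimed \(\frac{n}{2}\binom{d(G)}{3}\), consistent with equality for the convex \(K_n\) --- but only under the assumption that every pair of neighbors of \(v\) is actually joined by an edge of \(G\).

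Second, the step you defer as ``the main technical hurdle'' is not a hurdle to be smoothed over later; it is the theorem. Your wedge counts credit a vertex \(v\) only for edges that genuinely lie inside its wedges, and when the graph induced on \(N(v)\) is sparse those edges simply need not exist, so your telescoping identity produces nothing; moreover, the two-stars example above shows that no per-vertex redistribution can rescue a bound of the shape \(\sum_v\binom{\deg v}{3}\), so the repair cannot stay within your framework. The paper closes this gap by a structurally different argument: its key lemma counts, by discharging, edges \emph{incident to} the neighborhood \(N(v)\) (each neighbor \(w\) contributes \(\deg w-1\)) that are disjoint from just the two angularly extreme edges \(v\ell_v\) and \(vr_v\), rather than edges \emph{inside} \(N(v)\); summing over \(v\) and applying a Cauchy--Schwarz-type estimate yields a bound quadratic in \(d(G)\); and the cubic bound \(\binom{d(G)}{3}\) is then obtained by induction on \(|E(G)|\) --- all leftmost edges are deleted, the induction hypothesis is applied to the remaining graph, and the base case is the Hopf--Pannwitz/Erd\H{o}s theorem. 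In other words, the cubic lower bound is accumulated from quadratic gains over induction layers, not extracted from a single global count; if you want to pursue your approach, the substitute you need for the missing edges in sparse neighborhoods is precisely something like the paper's \(\deg w - 1\) count, and once you adopt it the telescoping identity no longer applies and an inductive structure of this kind becomes unavoidable.
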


Note that Theorem~\ref{theorem:main1} is a strengthening of a result mentioned above.
\begin{theorem}[Hopf and Pannwitz~\cite{hopf1934}, Erd\H{o}s~\cite{Erdos46}]\label{erdos}
If every edge of a geometric graph \( G \) intersects all other edges of \(G\), then
\(|E(G)|\leq |V(G)|\).
\end{theorem}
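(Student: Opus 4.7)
The plan is to establish the bound by combining a local geometric lemma (exploiting convexity at each vertex) with a global double-counting and a convexity argument.

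The geometric crux is the following lemma. Let $v$ be a vertex of a locally convex graph $G$. Since $v$ lies outside $\conv(N(v))$, the set $N(v)$ fits inside an open half-plane whose boundary passes through $v$; hence the neighbors $u_1,\dots,u_{d_v}$ admit a well-defined linear angular order as seen from $v$. Then for any index $i$ and any edge $xy\in E(G)$ such that $x$ and $y$ lie strictly on the same side of the line through $v$ and $u_i$ (and $\{x,y\}\cap\{v,u_i\}=\emptyset$), the edges $vu_i$ and $xy$ are disjoint. The proof is immediate: a segment with both endpoints strictly on one side of a line lies entirely on that side and cannot cross any segment contained in that line. In the important special case $x=u_j,\ y=u_k$ with $j,k>i$ or $j,k<i$, the hypothesis is automatic from local convexity at $v$, since the line through $v$ and $u_i$ separates $u_i$ from $\{u_{i+1},\dots,u_{d_v}\}$ on one side and from $\{u_1,\dots,u_{i-1}\}$ on the other.

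Using this lemma, I would derive the edgewise inequality
\[
|DJ(vu_i)|\ \geq\ |E(G[N^{+}(vu_i)])|+|E(G[N^{-}(vu_i)])|,
\]
where $N^{\pm}(vu_i)$ denote the two open half-planes cut by the line through $v$ and $u_i$. Summing over edges and using $\sum_{e\in E(G)}|DJ(e)|=2|DJ(G)|$, the plan is to massage the resulting sum into the form $|V(G)|\binom{d(G)}{3}$. Because $\binom{x}{3}$ is convex on $x\geq 2$ (its second derivative is $x-1$), the per-vertex contributions can be averaged via Jensen's inequality to give a bound in terms of the average degree $d(G)$. In the regime $d(G)\in[1,2]$ allowed by $2|E(G)|\geq|V(G)|$, the quantity $\binom{d(G)}{3}$ is non-positive, so the theorem is trivially true there and the only real work is for $d(G)>2$.

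The main difficulty is bridging the global double-counting with the target bound. For $K_n$ in convex position every disjoint pair of edges is detected by the lemma, so the inequality above is tight and gives exactly the claimed bound. For general locally convex graphs, however, many disjoint pairs may fail the strict line-separation hypothesis (one edge can cross the \emph{line} through the other without crossing the segment), and a graph like the star $K_{1,n-1}$ shows that one cannot simply replace $\binom{d(G)}{3}$ by the per-vertex sum $\sum_v\binom{d_v}{3}$. The delicate step is therefore to show that when summed over all edges $e$, the quantity $\sum_e\bigl(|E(G[N^+(e)])|+|E(G[N^-(e)])|\bigr)$ is still at least $|V(G)|\binom{d(G)}{3}$; this requires exploiting how the sides $N^{\pm}(e)$ are sized relative to the degrees $d_v$ at the endpoints of $e$, perhaps through a Cauchy--Schwarz or rearrangement estimate that averages the sidewise edge counts.
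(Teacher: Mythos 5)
Your proposal does not address the statement at hand. What you sketch is an attack on the paper's \cref{theorem:main2} (the lower bound $|DJ(G)|\geq \tfrac{n}{2}\binom{d(G)}{3}$ for \emph{locally convex} graphs), whereas the statement to be proved is the Hopf--Pannwitz--Erd\H{o}s theorem: for an \emph{arbitrary} geometric graph in which every edge intersects all other edges (so that $DJ(G)=\emptyset$), one has $|E(G)|\leq |V(G)|$. The paper itself does not prove this; it cites it and uses it as a black box. Your machinery hinges entirely on local convexity: the angular order of $N(v)$ as seen from $v$ exists precisely because $v\notin\conv(N(v))$. The statement assumes nothing of the sort. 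For instance, a star whose center lies inside the convex hull of its leaves satisfies the hypothesis (all edges share the center, hence pairwise intersect), yet its center is a non-convex vertex, so your key lemma cannot even be stated there. Consequently, even if your program were carried out in full, it would prove the conclusion only for locally convex graphs --- a strictly weaker claim than the statement. (The deduction for that restricted class does work: if $DJ(G)=\emptyset$ and $2e\geq n$, then $\binom{d(G)}{3}\leq 0$ forces $d(G)\leq 2$, i.e.\ $e\leq n$; and if $2e<n$ there is nothing to prove.)

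There are two further problems. First, your argument is incomplete even on its own terms: you concede that the ``delicate step'' --- converting the sum $\sum_e\bigl(|E(G[N^+(e)])|+|E(G[N^-(e)])|\bigr)$ into a bound of the form $|V(G)|\binom{d(G)}{3}$ --- is unresolved, and that step is the heart of the matter; the paper devotes Sections~\ref{section:preliminaries} and~\ref{section:proof2} (leftmost/rightmost edges, the discharging lemma, and an induction on the number of edges) to exactly this work. Second, within this paper the route is circular: the induction proving \cref{theorem:main2} invokes \cref{erdos} in its base case $2<d(G)\leq 3$, by deleting one edge from each disjoint pair and applying the Hopf--Pannwitz bound to the resulting graph. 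So deriving \cref{erdos} from \cref{theorem:main2} inverts the paper's logical order and is admissible only if you supply an independent, complete proof of the latter --- which the proposal does not. The correct move here is either to cite the classical result, as the paper does, or to give the classical charging/induction argument for pairwise intersecting edges directly.
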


The rest of the paper is organized as follows. In Section~\ref{section:preliminaries}, we prove auxiliary lemmas. In Sections~\ref{section:proof1} and~\ref{section:proof2}, we prove Theorems~\ref{theorem:main1} and~\ref{theorem:main2}, respectively. In Section~\ref{section:discussion}, we show that these theorems are tight for an infinite family of graphs. Also, in this section, we discuss open problems related to strengthenings of our main results.

\section{Preliminaries}
\label{section:preliminaries}

Throughout the rest of the paper, we denote by \( n \) and \(e\) the number of vertices and edges of a geometric graph \(G\), respectively, that is, \(n=|V(G)|\) and \(e=|E(G)|\). In this section, we additionally assume that \( G \) is a pointed graph such that every vertex has degree at least~\(2\).

For distinct points \(x,y,z\in \mathbb R^2\) in general position, by the \textit{oriented angle} \( \angle xyz \) we mean \( \alpha\) in the range \( (-\pi, \pi) \) such that the rotation by the angle \( \alpha \) around \( y \) maps the ray \( yx \) to the ray \( yz \). Here we assume that if \(\alpha\) is positive, then the corresponding rotation is in the counterclockwise direction, otherwise, it is in the clockwise direction; see Figure~\ref{figure: mathcal L and mathcal R}. By this definition, we have \( \angle xyz=-\angle zyx\). 

For a vertex \( v\in V(G)\), choose \( x,y\in N(v)\) such that
\begin{equation}
    \label{equation:max_angle}
\angle xvy =\max\left\{\angle avb: a,b \in N(v)\right\}.
\end{equation}
Set \( \ell_v: = y \) and \( r_v:=x \). The edges \( v\ell_v \) and \( vr_v \) are called the \textit{leftmost} and \textit{rightmost edges} of \( v \), respectively. Analogously, we call the vertices \( \ell_v \) and \( r_v \) the \textit{leftmost} and \textit{rightmost neighbours} of \( v \), respectively.
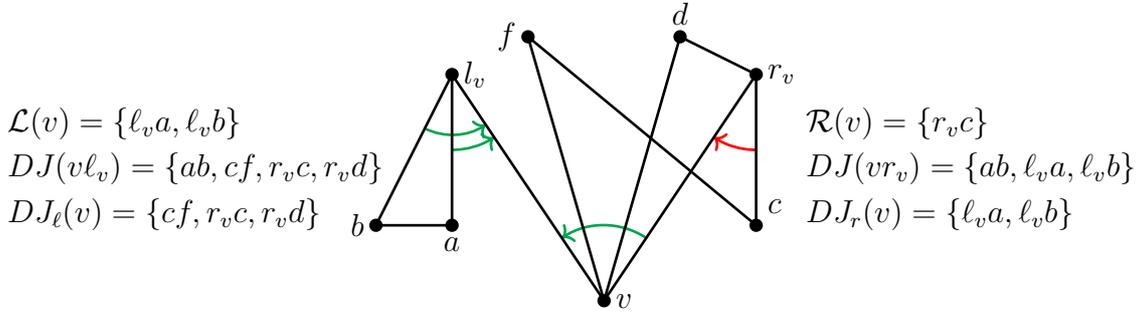
\begin{figure}[h]
						\begin{tikzpicture}[line cap=round,line width = 1pt, x = 1cm, y = 1cm]
							\coordinate (v) at (0,0);
							\coordinate (d) at (1,3.5);
							\coordinate (e) at (-1,3.5);
							\coordinate (r) at (2,3);
							\coordinate (l) at (-2,3);
							\coordinate (c) at (2,1);
							\coordinate (a) at (-2,1);
							\coordinate (b) at (-3,1);
							\draw (v) node  [right]{\(v\)};
							\draw (d)  node  [above]{\( d \)};
							\draw (r)  node  [right]{\( r_v \)};
							\draw (l)  node  [right]{\( l_v \)};
							\draw (c)  node  [above right]{\( c \)};
							\draw (b)  node  [left]{\( b \)};
							\draw (a)  node  [below]{\( a \)};
							\draw (e)  node  [left]{\( f \)};
							
							\filldraw (v) circle (2pt);
							\filldraw (d)  circle (2pt);
							\filldraw (l) circle (2pt);
							\filldraw (r)  circle (2pt);
							\filldraw (c)  circle (2pt);
							\filldraw (a)  circle (2pt);
							\filldraw (b)  circle (2pt);
							\filldraw (e)  circle (2pt);
							
							\draw pic[draw=red, <-, angle radius=1cm]
							{angle=v--r--c};
							\draw pic[draw=Green, ->, angle radius=1cm]
							{angle=a--l--v};
							\draw pic[draw=Green, ->, angle radius=0.8cm]
							{angle=b--l--v};
							\draw pic[draw=Green, ->, angle radius=1cm]
							{angle=r--v--l};
							\draw (v)--(d);
							\draw (v)--(l);
							\draw (v)--(r);
							\draw (r)--(c);
							\draw (l)--(a);
							\draw (l)--(b);
							\draw (d)--(r);
							\draw (e)--(v);
							\draw (a)--(b);
							\draw (e)--(c);
							
							\draw (-8, 2.35) node [right]{$\mathcal{L}(v) = \{\ell_va, \ell_vb\}$};
							\draw (-8, 1.75) node [right]{$DJ(v\ell_v) = \{ab, cf, r_vc, r_vd\}$};
							\draw (-8, 1.15) node [right]{$DJ_\ell(v) = \{cf, r_vc, r_vd\}$};
							
							\draw (2.5, 2.35) node [right]{\(\mathcal{R}(v) = \{r_vc\}\)};
							\draw (2.5, 1.75) node [right]{\(DJ(vr_v) = \{ab, \ell_va, \ell_vb\}\)};
							\draw (2.5, 1.15) node [right]{\(DJ_r(v) = \{\ell_va, \ell_vb\}\)};
							
							\end{tikzpicture}
					
					\caption{The green angles
					are positive and the red angle
					is negative.}
				\label{figure: mathcal L and mathcal R}	
					\end{figure}

It turns out that in the case of pointed graphs it is enough to focus only on the edges disjoint from leftmost and rightmost edges.
To formalize this idea, we introduce the following important notation that we will use instead of \(DJ(uv)\). 

For \( v \in V(G) \), let \( DJ_\ell(v) \) be the set of edges incident to one of the vertices of \( N(v) \) and disjoint from the leftmost edge \( v\ell_v \). Analogously, let \(DJ_r(v) \) be the set of edges incident to one of the vertices of \( N(v) \) and disjoint from the rightmost edge \( vr_v \); see Figure~\ref{figure: mathcal L and mathcal R}.  Clearly, \( DJ_\ell(v)\subseteq DJ(v\ell_v)\) and \(DJ_r(v)\subseteq DJ(vr_v) \), and thus, we have \(|DJ(v\ell_v)| \geq |DJ_\ell(v)|\) and \( |DJ(vr_v)| \geq |DJ_r(v)|\).  

For \(v\in V(G)\), denote by \(\mathcal L(v) \) the set of edges \(\ell_vx\in E(G)\) such that the angle \(\angle x\ell_v v\) is positive; see Figure~\ref{figure: mathcal L and mathcal R}. Equivalently, the edge \( \ell_v x\) belongs to \(\mathcal L(v) \) if and only if the ray \(\ell_v x\) shares with the affine convex cone \(v+ \cone\{\ell_v-v,r_v -v\} \) only the point~\(\ell_v\). Clearly, the edges from \( \mathcal L(v) \) are disjoint from \(vr_v\) and \(\mathcal L(v)\subset DJ_r(v)\). 

Analogously, denote by \(\mathcal R(v)\) the set of edges \(r_vv\in E(G)\) such that the angle \(\angle xr_vv\) is negative; see Figure~\ref{figure: mathcal L and mathcal R}. Clearly, the edges from \( \mathcal R(v) \) are disjoint from \( v\ell_v \) and \( \mathcal R(v) \subset DJ_\ell(v) \). Remark that if \(\ell_v x\in \mathcal L(v)\) or \(r_vx\in \mathcal R(v)\), then \(x\not\in N(v)\).

\begin{lemma}
\label{lemma:simple?}
For any vertex \( v\in V(G) \), we have
\begin{equation}
\label{equation: first lemma}
    | DJ_\ell (v) | + | DJ_r (v) | \geq \sum_{w\in N(v)\setminus\{\ell_v, r_v\}} (\deg w -1) + |\mathcal L(v)| + |\mathcal R(v)|,
\end{equation}
where \( \deg w \) is the degree of vertex \(w\in V(G)\).
\end{lemma}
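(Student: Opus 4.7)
The plan is to match each contribution to the right-hand side with a distinct contribution to $|DJ_\ell(v)|+|DJ_r(v)|$ via short convexity and half-plane arguments.

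The key geometric fact I would establish first is that every $w\in N(v)\setminus\{\ell_v,r_v\}$ lies strictly inside the open angular wedge $W$ at $v$ bounded by the rays from $v$ through $\ell_v$ and from $v$ through $r_v$; this follows from the convexity of the vertex $v$, the maximality of $\angle r_v v\ell_v$, and general position. Its crucial consequence is that for any such $w$ and any $x\in V(G)\setminus\{v\}$, the segment $wx$ crosses at most one of $v\ell_v, vr_v$: the supporting line of $wx$ does not pass through $v$ by general position, so since $W$ is an open convex set and $w\in W$, this line enters and exits $W$ through the boundary $\partial W$ consisting of the two boundary rays, and $w$ lies strictly between any two such boundary crossings on the line; hence at most one such crossing lies on the segment from $w$ to $x$.

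Using this, I would process the first sum on the right-hand side by considering every edge $wx$ with $w\in N(v)\setminus\{\ell_v,r_v\}$ and $x\neq v$. If $x=\ell_v$, then $w$ and $\ell_v$ both lie strictly on the same side of the line through $v,r_v$, so $w\ell_v$ neither crosses nor shares a vertex with $vr_v$, giving $w\ell_v\in DJ_r(v)$. Symmetrically, $x=r_v$ yields $wr_v\in DJ_\ell(v)$. Otherwise, $wx$ is vertex-disjoint from both $v\ell_v$ and $vr_v$, and by the key observation it fails to cross at least one of them, so $wx\in DJ_\ell(v)\cup DJ_r(v)$. When both endpoints lie in $N(v)\setminus\{\ell_v,r_v\}$, the whole segment $wx$ lies in the convex open set $W$, so $wx\in DJ_\ell(v)\cap DJ_r(v)$, contributing $2$ to the left-hand side and matching the double count in the sum.

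For the remaining terms I would use a direct half-plane argument. For $\ell_v x\in\mathcal L(v)$, the condition $\angle x\ell_v v>0$ means that $x$ lies strictly on the side of the line through $v,\ell_v$ opposite to $r_v$; thus the open segment $\ell_v x$ sits in one open half-plane of this line and $vr_v$ sits in the other, and they meet the separating line only at the distinct points $\ell_v$ and $v$, so the two segments are disjoint. Together with $x\notin N(v)$ (so $x\neq r_v$) this gives $\ell_v x\in DJ_r(v)$, and the symmetric argument shows $\mathcal R(v)\subseteq DJ_\ell(v)$. These edges are distinct from those handled above, since each has its non-$\{\ell_v,r_v\}$ endpoint outside $N(v)$. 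I expect the main obstacle to be bookkeeping rather than geometry: one must ensure that edges with both endpoints in $N(v)\setminus\{\ell_v,r_v\}$ really contribute $2$ to $|DJ_\ell(v)|+|DJ_r(v)|$ (matching their double appearance in the sum) and that the contributions from $\mathcal L(v),\mathcal R(v)$ are disjoint from those of the first sum.
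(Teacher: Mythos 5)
Your proof is correct and follows essentially the same route as the paper's, which organizes the identical case analysis as a discharging argument: edges joining $N(v)\setminus\{\ell_v,r_v\}$ to the outside get charge $1$ and are disjoint from at least one of $v\ell_v,vr_v$; edges with both endpoints in $N(v)\setminus\{\ell_v,r_v\}$ get charge $2$ and are disjoint from both; and $\mathcal L(v),\mathcal R(v)$ contribute one each. Your wedge-crossing observation (a segment from a point inside the open wedge meets at most one boundary ray) is a cleaner justification of the paper's third case, which the paper supports only by figures.
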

\begin{proof}

To prove this lemma, we apply the so-called discharging method. Let us assign a charge to every edge \(wt\in E(G)\) as follows:
\begin{itemize}
    \item[1.] If the vertex \(v\) coincides with \(w\) or \(t\), then the charge of \(wt\) is 0.
    \item[2.] If \(wt\in \mathcal L(v)\cup \mathcal R(v)\), then the charge of \(wt\) is 1.
    \item[3.] If \(wt\not\in \mathcal L(v)\cup \mathcal R(v)\) and \(v\) is distinct from \(w\) and \(t\), then the charge of \(wt\) is 
    \[|\{w,t\}\cap N(v)\setminus\{\ell_v,r_v\}|,\]
    which can be equal to \(0,1,\) or \(2\); see the Figure~\ref{figure: cases in Lemma 6}.
\end{itemize}

		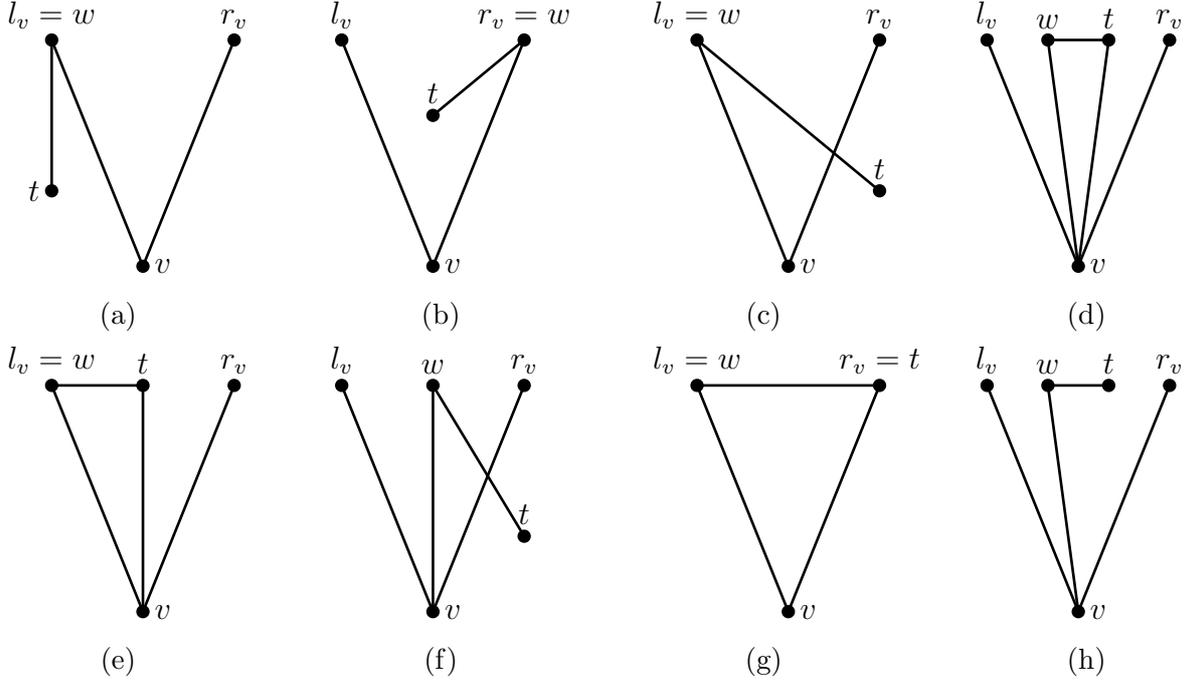
\begin{figure}[h]
				\begin{subfigure}[b]{0.2\textwidth}
					\begin{tikzpicture}[line cap=round,line width = 1pt, x = 0.8cm, y = 1cm]
						\coordinate (v) at (0,0);
						\coordinate (r) at (1.5,3);
						\coordinate (l) at (-1.5,3);
						\coordinate (t) at (-1.5,1);
						
						\draw (v) node  [right]{\(v\)};
						\draw (r)  node  [above]{\( r_v \)};
						\draw (l)  node  [above]{\( l_v=w \)};
						\draw (t)  node  [left]{\( t \)};
						
						\filldraw (v) circle (2pt);
						\filldraw (l) circle (2pt);
						\filldraw (r)  circle (2pt);
						\filldraw (t)  circle (2pt);
						\draw (v) -- (l);
						\draw (v) -- (r);
						\draw (l) -- (t);
						
				\end{tikzpicture}
			\caption{}
			\label{figure1:a}
				\end{subfigure}
			\hfill
			\begin{subfigure}[b]{0.2\textwidth}
					\begin{tikzpicture}[line cap=round,line width = 1pt, x = 0.8cm, y = 1cm]
						\coordinate (v) at (0,0);
						\coordinate (r) at (1.5,3);
						\coordinate (l) at (-1.5,3);
						\coordinate (t) at (0,2);
						
						\draw (v) node  [right]{\(v\)};
						\draw (r)  node  [above]{\( r_v=w \)};
						\draw (l)  node  [above]{\( l_v \)};
						\draw (t)  node  [above]{\( t \)};
						
						\filldraw (v) circle (2pt);
						\filldraw (l) circle (2pt);
						\filldraw (r)  circle (2pt);
						\filldraw (t)  circle (2pt);
						\draw (v) -- (l);
						\draw (v) -- (r);
						\draw (r) -- (t);
						
				\end{tikzpicture}
			\caption{}
			\label{figure1:b}
				\end{subfigure}
			\hfill
			\begin{subfigure}[b]{0.2\textwidth}
				\begin{tikzpicture}[line cap=round,line width = 1pt, x = 0.8cm, y = 1cm]
					\coordinate (v) at (0,0);
					\coordinate (r) at (1.5,3);
					\coordinate (l) at (-1.5,3);
					\coordinate (t) at (1.5,1);
					
					\draw (v) node  [right]{\(v\)};
					\draw (r)  node  [above]{\( r_v \)};
					\draw (l)  node  [above]{\( l_v=w \)};
					\draw (t)  node  [above]{\( t \)};
					
					\filldraw (v) circle (2pt);
					\filldraw (l) circle (2pt);
					\filldraw (r)  circle (2pt);
					\filldraw (t)  circle (2pt);
					\draw (v) -- (l);
					\draw (v) -- (r);
					\draw (l) -- (t);
					
				\end{tikzpicture}
				\caption{}
				\label{figure1:c}
			\end{subfigure}
		\hfill
		\begin{subfigure}[b]{0.2\textwidth}
			\begin{tikzpicture}[line cap=round,line width = 1pt, x = 0.8cm, y = 1cm]
			\coordinate (v) at (0,0);
			\coordinate (r) at (1.5,3);
			\coordinate (l) at (-1.5,3);
			\coordinate (t) at (0.5,3);
			\coordinate (w) at (-0.5,3);
			
			\draw (v) node  [right]{\(v\)};
			\draw (r)  node  [above]{\( r_v \)};
			\draw (l)  node  [above]{\( l_v\)};
			\draw (t)  node  [above]{\( t \)};
			\draw (w)  node  [above]{\( w \)};
			
			\filldraw (v) circle (2pt);
			\filldraw (l) circle (2pt);
			\filldraw (r)  circle (2pt);
			\filldraw (t)  circle (2pt);
			\filldraw (w)  circle (2pt);
			\draw (v) -- (l);
			\draw (v) -- (r);
			\draw (w) -- (t);
			\draw (t) -- (v);
			\draw (w) -- (v);

			\end{tikzpicture}
			\caption{}
			\label{figure1:d}
		\end{subfigure}
	\hfill
	\begin{subfigure}[b]{0.2\textwidth}
		\begin{tikzpicture}[line cap=round,line width = 1pt, x = 0.8cm, y = 1cm]
			\coordinate (v) at (0,0);
			\coordinate (r) at (1.5,3);
			\coordinate (l) at (-1.5,3);
			\coordinate (t) at (0,3);
			
			\draw (v) node  [right]{\(v\)};
			\draw (r)  node  [above]{\( r_v \)};
			\draw (l)  node  [above]{\( l_v=w \)};
			\draw (t)  node  [above]{\( t \)};
			
			\filldraw (v) circle (2pt);
			\filldraw (l) circle (2pt);
			\filldraw (r)  circle (2pt);
			\filldraw (t)  circle (2pt);
			\draw (v) -- (l);
			\draw (v) -- (r);
			\draw (l) -- (t);
			\draw (t) -- (v);
			
		\end{tikzpicture}
		\caption{}
		\label{figure1:e}
	\end{subfigure}
\hfill
\begin{subfigure}[b]{0.2\textwidth}
	\begin{tikzpicture}[line cap=round,line width = 1pt, x = 0.8cm, y = 1cm]
		\coordinate (v) at (0,0);
		\coordinate (r) at (1.5,3);
		\coordinate (l) at (-1.5,3);
		\coordinate (t) at (1.5,1);
		\coordinate (w) at (0,3);
		
		\draw (v) node  [right]{\(v\)};
		\draw (r)  node  [above]{\( r_v \)};
		\draw (l)  node  [above]{\( l_v \)};
		\draw (t)  node  [above]{\( t \)};
		\draw (w)  node  [above]{\( w \)};
		
		\filldraw (v) circle (2pt);
		\filldraw (l) circle (2pt);
		\filldraw (r)  circle (2pt);
		\filldraw (t)  circle (2pt);
		\filldraw (w)  circle (2pt);
		\draw (v) -- (l);
		\draw (v) -- (r);
		\draw (w) -- (t);
		\draw (w) -- (v);
		
	\end{tikzpicture}
	\caption{}
		\label{figure1:f}
	\end{subfigure}
	\hfill
	\begin{subfigure}[b]{0.2\textwidth}
		\begin{tikzpicture}[line cap=round,line width = 1pt, x = 0.8cm, y = 1cm]
		\coordinate (v) at (0,0);
		\coordinate (r) at (1.5,3);
		\coordinate (l) at (-1.5,3);
		\coordinate (t) at (1.5,1);
		\coordinate (w) at (0,3);
		
		\draw (v) node  [right]{\(v\)};
		\draw (r)  node  [above]{\( r_v = t\)};
		\draw (l)  node  [above]{\( l_v = w\)};
		
		\filldraw (v) circle (2pt);
		\filldraw (l) circle (2pt);
		\filldraw (r)  circle (2pt);
		\draw (v) -- (l);
		\draw (v) -- (r);
		\draw (r) -- (l);

		\end{tikzpicture}
		\caption{}
		\label{figure1:g}
	\end{subfigure}
	\hfill
	\begin{subfigure}[b]{0.2\textwidth}
		\begin{tikzpicture}[line cap=round,line width = 1pt, x = 0.8cm, y = 1cm]
		\coordinate (v) at (0,0);
		\coordinate (r) at (1.5,3);
		\coordinate (l) at (-1.5,3);
		\coordinate (t) at (0.5,3);
		\coordinate (w) at (-0.5,3);
		
		\draw (v) node  [right]{\(v\)};
		\draw (r)  node  [above]{\( r_v \)};
		\draw (l)  node  [above]{\( l_v\)};
		\draw (w)  node  [above]{\( w \)};
		\draw (t)  node  [above]{\( t \)};
		
		\filldraw (v) circle (2pt);
		\filldraw (l) circle (2pt);
		\filldraw (r)  circle (2pt);
		\filldraw (w)  circle (2pt);
		\filldraw (t)  circle (2pt);
		\draw (v) -- (l);
		\draw (v) -- (r);
		\draw (w) -- (v);
		\draw (w) -- (t);
			
		\end{tikzpicture}
		\caption{}
		\label{figure1:h}
	\end{subfigure}
	
\caption{In cases (b), (c), and (g), the charge of $wt$ is 0, in cases (a), (e), (f), and (h), the charge $wt$ is 1, and in case (d) the charge of $wt$ is 2.}
	\label{figure: cases in Lemma 6}
\end{figure}

Notice that the charge of an edge is well-define. Moreover, it equals~2 if and only the edge connects two neighbours of \(v\) distinct from its rightmost and leftmost neighbours. Thus we conclude that the sum of charges of all edges equals the right-hand side of the desired inequality. 

Notice that an edge has a positive charge only if it connects one of the neighbours of $v$ distinct from $\ell_v$ and $r_v$. Hence it is enough to show that if an edge \(wt\in E(G)\) has charge 1 or 2, then it is disjoint from one or two of the edges \(vr_v\) and \(v\ell_v\), respectively. There are the following possible cases:
    \begin{itemize}
        \item[1.] If \(wt\) belongs to \(\mathcal L(v)\) or \( \mathcal R(v) \) then it has charge 1 and is disjoint from \(vr_v\) or \(v\ell_v\), respectively. See Figure~\ref{figure1:a}.
        \item[2.] If the edge \( wt \) connects a vertex in \(N(v)\setminus\{r_v,\ell_v\}\) with a vertex in \[V(G)\setminus (\{v\}\cup N(v)\setminus\{r_v,\ell_v\}),\] then \(wt\) has charge 1 and is disjoint from at least one of the edges \(v\ell_v\) or \(vr_v\). See Figures~\ref{figure1:e}, ~\ref{figure1:f} and~\ref{figure1:h}.
        \item[3.] If the edge \(wt\) connects two vertices from \(N(v)\setminus\{r_v,\ell_v\}\), then \(wt\) has charge 2 and lies in the interior of the affine convex cone \(v+\cone \{ r_v- v, \ell_v-v\}\), and thus, it is disjoint from the edges \(v\ell_v\) and \(vr_v\). See Figure~\ref{figure1:d}.
    \end{itemize}
    
Since any other edge has charge 0, we are done; see Figures~\ref{figure1:c}, ~\ref{figure1:b} and ~\ref{figure1:g}.
\end{proof}

For \( v \in V(G) \), denote by \( \alpha_{\ell}(v) \) the number of vertices \( w\in N(v) \) such that $v$ is the leftmost neighbour of $w$, that is, $w=\ell_v$; see Figure~\ref{figure: alpha}. Analogously, denote by \( \alpha_r(v) \) the number of vertices \( w\in N(v) \) such that $v$ is the rightmost neighbour of $w$, that is, \( v=r_w \).
Since any vertex has exactly one leftmost edge and exactly one rightmost edge, we get 
\[
    \label{equation:sumofalpha}
    \sum_{ v\in V(G) }   \alpha_\ell(v)=\sum_{v\in V(G)}\alpha_r(v)=n.
\]
Also, we need the standard equality
\[
\label{equation:sumofdegrees}
\sum_{w\in V(G)}\deg w=2e.
\]
\begin{corollary}
\label{corollary:summation}
We have
\begin{gather}
\sum_{v\in V(G)} \Big( | DJ_\ell (v) | + | DJ_r (v) | \Big) \geq - 2(e -n) + \sum_{w\in V(G)} \deg^2 w 
\\ 
- 
\sum_{w\in V(G)}\big(\alpha_\ell(w) + \alpha_r(w)\big)\deg w
+ \sum_{v\in V(G)} |\mathcal L(v)| + \sum_{v\in V(G)}|\mathcal R(v)|
\end{gather}
\end{corollary}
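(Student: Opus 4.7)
The plan is to sum the inequality of Lemma~\ref{lemma:simple?} over all \(v\in V(G)\) and then rewrite the right-hand side by interchanging the order of summation. The left-hand side immediately becomes \(\sum_{v\in V(G)}(|DJ_\ell(v)|+|DJ_r(v)|)\), and the terms \(\sum_v|\mathcal{L}(v)|\) and \(\sum_v|\mathcal{R}(v)|\) already appear on the right-hand side as claimed, so the real work is to evaluate
\[
S := \sum_{v\in V(G)} \sum_{w\in N(v)\setminus\{\ell_v,r_v\}} (\deg w - 1).
\]

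The key step is a double-counting argument. Swapping the order of summation, each vertex \(w\in V(G)\) contributes \(\deg w - 1\) once for every \(v\in V(G)\) such that \(w\in N(v)\setminus\{\ell_v,r_v\}\). The number of such \(v\) is exactly \(\deg w - \alpha_\ell(w) - \alpha_r(w)\): there are \(\deg w\) neighbors \(v\) of \(w\), and we must exclude those for which \(w\) plays the role of \(\ell_v\) (there are \(\alpha_\ell(w)\) of them by definition) or of \(r_v\) (there are \(\alpha_r(w)\) of them). Therefore
\[
S = \sum_{w\in V(G)} (\deg w - 1)\bigl(\deg w - \alpha_\ell(w) - \alpha_r(w)\bigr).
\]

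Expanding the product yields four sums. Two of them match the target expression directly: \(\sum_w \deg^2 w\) and \(-\sum_w(\alpha_\ell(w)+\alpha_r(w))\deg w\). The remaining two sums are \(-\sum_w \deg w\) and \(+\sum_w(\alpha_\ell(w)+\alpha_r(w))\), which by the identities displayed just before the corollary equal \(-2e\) and \(2n\), respectively. Combining, these two terms give \(-2(e-n)\), exactly matching the corresponding term in the statement.

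This is essentially a bookkeeping step, so no serious obstacle arises. The only point that requires a moment of attention is justifying why the number of vertices \(v\) with \(w=\ell_v\) (respectively \(w=r_v\)) is \(\alpha_\ell(w)\) (respectively \(\alpha_r(w)\)), which follows straight from the definitions, since \(w=\ell_v\) forces \(v\in N(w)\). Once this is observed, the calculation above is mechanical and produces exactly the right-hand side asserted in Corollary~\ref{corollary:summation}.
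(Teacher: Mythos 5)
Your proof is correct and follows essentially the same route as the paper: sum Lemma~\ref{lemma:simple?} over all vertices and double-count the term \(\sum_{v}\sum_{w\in N(v)\setminus\{\ell_v,r_v\}}(\deg w-1)\), using \eqref{equation:sumofalpha} and \eqref{equation:sumofdegrees} to simplify. The paper organizes the count as \(\sum_v\sum_{w\in N(v)}(\deg w-1)\) minus a correction of \(\sum_w(\alpha_\ell(w)+\alpha_r(w))(\deg w-1)\), while you count directly how many times each \(w\) appears; these are the same computation.
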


\begin{proof}
Summing up the inequalities from Lemma~\ref{lemma:simple?} for all vertices of \(G\), we obtain that
\begin{gather}
\sum_{v\in V(G)} \Big( | DJ_\ell (v) | + | DJ_r (v) | \Big) \geq \sum_{v\in V(G)} \sum_{w\in N(v)} (\deg w - 1)
\\-\sum_{w\in V(G)} (\alpha_\ell(w)+\alpha_r(w))(\deg w -1)+\sum_{v\in V(G)} |\mathcal L(v)| + \sum_{v\in V(G)}|\mathcal R(v)|.
\end{gather}
By~\eqref{equation:sumofalpha} and~\eqref{equation:sumofdegrees}, we are done.
\end{proof}

For a vertex \(v\), let \(\mathcal L'(v)\) be a subset of \(\mathcal L(v)\) consisting of edges \(\ell_vw\) with \(\ell_w= \ell_v\), that is, \( \ell_v \) is the leftmost neighbour also for \( w \); see Figure~\ref{figure: alpha}. 
Analogously, let \(\mathcal R'(v)\) be a subset of \(\mathcal R(v)\) consisting of edges \(r_vw\) with \(r_w= r_v\), that is, \( r_v \) is the rightmost neighbour also for \( w \). 

\begin{lemma}
\label{lemma:discharging!}
For a vertex \(v\in V(G)\), the following equalities hold
\[
    \sum_{v\in V(G)}|\mathcal L'(v)| = \sum_{v\in V(G)}\frac{\alpha_\ell(v)(\alpha_\ell(v)-1)}{2}
\text{\ \ and\ \ }
    \sum_{v\in V(G)} |\mathcal R'(v)| = \sum_{v\in V(G)} \frac{\alpha_r(v)(\alpha_r(v)-1)}{2}.
\]
\end{lemma}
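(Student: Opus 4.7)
The plan is a double-counting (or swap-of-summation) argument. Define, for each $u \in V(G)$, the set $S_u := \{w \in N(u) : \ell_w = u\}$ of neighbors of $u$ whose leftmost edge points at $u$; by definition $|S_u| = \alpha_\ell(u)$. The first step is to rewrite $\sum_v |\mathcal L'(v)|$ as a sum indexed by $u$. Observe that $|\mathcal L'(v)|$ counts vertices $w$ with $\ell_v w \in E(G)$, $\ell_w = \ell_v$, and $\angle w \ell_v v > 0$. The condition $\ell_w = \ell_v$ already forces the edge $\ell_v w$ to exist, since it is the leftmost edge of $w$, so once we fix $u = \ell_v = \ell_w$ the membership in $E(G)$ becomes automatic.

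Grouping terms by $u = \ell_v$, I get
\[
\sum_{v \in V(G)} |\mathcal L'(v)| \;=\; \sum_{u \in V(G)} \sum_{v \in S_u} \bigl|\{w \in S_u : \angle w u v > 0\}\bigr| \;=\; \sum_{u \in V(G)} N_u,
\]
where $N_u$ denotes the number of ordered pairs $(v, w) \in S_u \times S_u$ with $\angle w u v > 0$.

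The final step is to evaluate $N_u$. Since the vertices of $G$ are in general position, for distinct $v, w \in S_u$ we have $\angle w u v \neq 0$, and the identity $\angle w u v = -\angle v u w$ stated in the paper shows that exactly one of the two ordered pairs $(v, w)$, $(w, v)$ is counted, while the diagonal $v = w$ contributes nothing. Therefore $N_u = \binom{|S_u|}{2} = \binom{\alpha_\ell(u)}{2}$, and summing over $u$ yields the first claimed equality. The second equality, for $\mathcal R'$ and $\alpha_r$, follows by the symmetric argument with ``positive'' replaced by ``negative'' and $\ell$ replaced by $r$ throughout. I do not expect any serious difficulty here; the one potentially subtle point---that the edge $uw$ automatically exists whenever $\ell_w = u$---is immediate from the definition of the leftmost edge, and the remainder is a clean pairing argument.
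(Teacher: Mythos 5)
Your proof is correct and takes essentially the same route as the paper's: both regroup the sum over \(v\) by the common vertex \(u=\ell_v\) and reduce the claim to counting, for each \(u\), the pairs of vertices in \(\{w\in N(u):\ell_w=u\}\). The only difference is how that count is evaluated --- the paper sorts these vertices angularly and sums \(k-i\), while you invoke the antisymmetry \(\angle wuv=-\angle vuw\) together with general position to pair off ordered pairs --- which is a cosmetic distinction.
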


\begin{proof}
If we prove that for each vertex \( w\in V(G) \), we have
\[
\label{equality1}
    \sum_{v\in V(G): \ell_v=w} |\mathcal L'(v)|=\frac{\alpha_\ell(w)(\alpha_\ell(w)-1)}{2}
\]

then the first desired equality follows. Analogously, one can show the second equality.

Since \eqref{equality1} is trivial if \(\alpha_\ell(w)=0 \), we may assume \(\alpha_\ell(w)=k>0\). Let \( u_1, \dots, u_k\in N(w) \) be distinct vertices with \(\ell_{u_i}=w\); see the vertex \(w\) and its neighbours on Figure~\ref{figure: alpha}. Without loss of generality, assume that among \(wu_1, \dots, wu_k\), the edge \(wu_1\) is the leftmost edge, the edge \( wu_2 \) is the second leftmost edge, etc., that is, the angles \(\angle u_iwu_j\) for \( 1\leq i<j\leq k \) are positive. Therefore, \(|\mathcal L'(u_i)| = k-i\) for \(1\leq i\leq k\). Summing up all these equalities, we obtain~\eqref{equality1}. 
\end{proof}
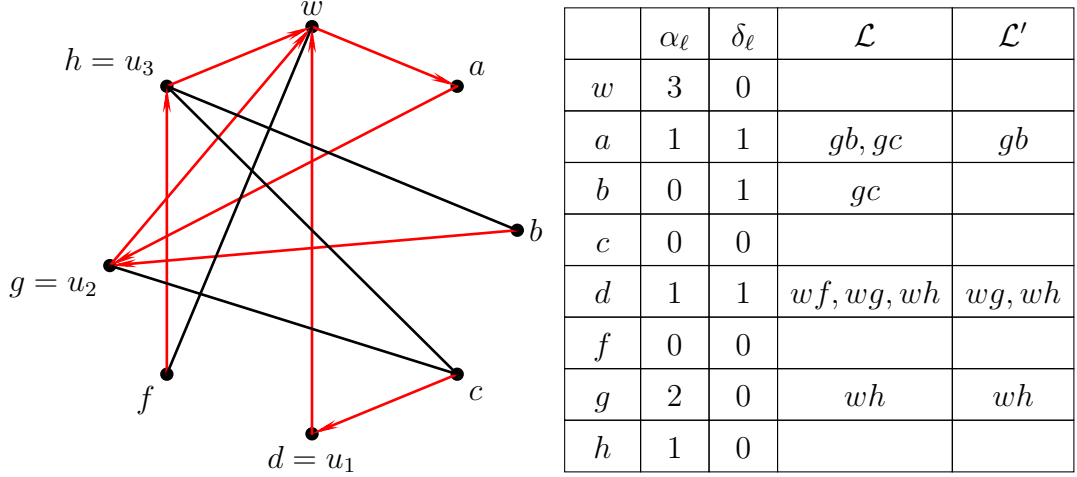
\begin{figure}[h]
    \begin{subfigure}[b]{0.45\textwidth}
	
		\begin{tikzpicture}[line cap=round,line width = 1pt, x = 0.9cm, y = 0.9cm]
		    \def \rad{3};
			\coordinate (b) at (0:\rad);
			\coordinate (c) at (-45:\rad);
			\coordinate (d) at (-90:\rad);
			\coordinate (e) at (-135:\rad);
			\coordinate (f) at (-170:\rad);
			\coordinate (g) at (-225:\rad);
			\coordinate (w) at (-270:\rad);
			\coordinate (a) at (-315:\rad);
			
			\filldraw (a) circle (2pt);
			\filldraw (b) circle (2pt);
			\filldraw (c) circle (2pt);
			\filldraw (d) circle (2pt);
			\filldraw (e) circle (2pt);
			\filldraw (f) circle (2pt);
			\filldraw (g) circle (2pt);
			\filldraw (w) circle (2pt);
			
			\draw (a) node  [above right]{\(a\)};
			\draw (b) node  [right]{\(b\)};
			\draw (c) node  [below right]{\(c\)};
			\draw (d) node  [below]{\(d = u_1\)};
			\draw (e) node  [below left]{\(f\)};
			\draw (f) node  [below left]{\(g = u_2\)};
			\draw (g) node  [above left]{\(h = u_3\)};
			\draw (w) node  [above]{\(w\)};
			
			\draw[red, -{Stealth[length=10pt, width=3pt]}] (w) -- (a);
			\draw[red, -{Stealth[length=10pt, width=3pt]}] (a) -- (f);
			\draw[red, -{Stealth[length=10pt, width=3pt]}] (b) -- (f);
			\draw[red, -{Stealth[length=10pt, width=3pt]}] (c) -- (d);
			\draw[red, -{Stealth[length=10pt, width=3pt]}] (d) -- (w);
			\draw[red, -{Stealth[length=10pt, width=3pt]}] (e) -- (g);
			\draw[red, -{Stealth[length=10pt, width=3pt]}] (f) -- (w);
			\draw[red, -{Stealth[length=10pt, width=3pt]}] (g) -- (w);
			
			\draw (g) -- (b);
			\draw (g) -- (c);
			\draw (e) -- (w);
			\draw (f) -- (c);

		\end{tikzpicture}
		\end{subfigure}
		\begin{subfigure}[b]{0.45\textwidth}
		\begin{tikzpicture}[cell/.style={rectangle,draw=black}, nodes in empty cells]
  \matrix[
  matrix of nodes,
  row sep =-\pgflinewidth,
  column sep = -\pgflinewidth,
  nodes={anchor=center,text height=2ex,text depth=0.25ex,cell},
  column 1/.style = {nodes={minimum width=1cm}},
  column 2/.style = {nodes={minimum width=0.9cm}},
  column 3/.style = {nodes={minimum width=0.9cm}},
  column 4/.style = {nodes={minimum width=2.3cm}},
  column 5/.style = {nodes={minimum width=1.6cm}},
  ] 
  {  & \(\alpha_\ell\) & \(\delta_\ell\) & \(\mathcal L\) & \(\mathcal L'\) \\
    \(w\) & 3 & 0 & \(\)          &\(\)      \\
    \(a\) & 1 & 1 & \(gb,gc\)     &\(gb\)    \\
    \(b\) & 0 & 1 & \(gc\)        &\(\)      \\
    \(c\) & 0 & 0 & \(\)          &\(\)      \\
    \(d\) & 1 & 1 & \(wf,wg,wh\)  &\(wg,wh\) \\
    \(f\) & 0 & 0 & \(\)          &\(\)      \\
    \(g\) & 2 & 0 & \(wh\)        &\(wh\)    \\
    \(h\) & 1 & 0 & \(\)          &\(\)      \\
  };
\end{tikzpicture}
		\end{subfigure}
	
	\caption{Here we use red arrows to illustrate leftmost edges: If an arrow connects \(x\) to \(y\), then \(y\) is the leftmost neighbour of \(x\).}
\label{figure: alpha}	
\end{figure}

To apply induction in Theorem~\ref{theorem:main2}, we consider the contribution of the vertices that satisfy one of the properties
\[
\deg(v)=\alpha_\ell(v), \deg(v)=\alpha_r(v), \mathcal L'(v)\subsetneq \mathcal L (v),\text{ or } \mathcal R'(v) \subsetneq \mathcal R(v).
\]
For that, we introduce the following auxiliary notation. 

Denote by \( n_\ell\) the number of vertices \( v\in V(G) \) such that for every \(w\in N(v)\) we have \( v=\ell_w \), that is, \( \deg(v) = \alpha_\ell(v) \). Analogously, denote by \( n_r \) the number of vertices \( v\in V(G) \) such that for every \( w\in N(v) \) we have \( v = r_w\), that is, \(\deg (v)=\alpha_r(v)\). 

For a vertex \(v\in V(G)\), put \(\delta_\ell(v)=1\) if there is at least one edge \(\ell_vx\) such that \(\angle x\ell_v v\) is positive and \(\ell_x\ne \ell_v\), otherwise, put \( \delta_\ell(v)=0 \); see Figure~\ref{figure: alpha}. Analogously, we define \(\delta_r(v)\). Remark that \(\delta_\ell(v)=1\) if and only if the set \( \mathcal L(v)\setminus\mathcal L'(v)\) is not empty. In particular
, we have
\[
\label{sum:ahaha}
    |\mathcal L(v)|\geq |\mathcal L'(v)|+\delta_\ell(v) \text{ and } |\mathcal R(v)|\geq |\mathcal R'(v)|+ \delta_r(v).
\]

\begin{corollary}
\label{maincorollary}
The following inequality holds
\begin{gather}
\sum_{v \in V(G)} |DJ_\ell(v)|+ \sum_{v \in V(G)}|DJ_r(v)|\\ \geq \left(\frac{(2e-n)^2}{2(n -  n_\ell)} - e + \frac{n}{2} + \sum_{v\in V(G)}\delta_\ell(v)\right) + \left(\frac{(2e-n)^2}{2(n - n_r)} - e + \frac{n}{2}+\sum_{v\in V(G)}\delta_r(v)\right).
\end{gather}
\end{corollary}
\begin{proof}
 The idea of the proof is to apply the inequality of arithmetic and geometric means to Corollary~\ref{corollary:summation}. Indeed, 
by Corollary~\ref{corollary:summation}, Lemma~\ref{lemma:discharging!} and \eqref{sum:ahaha}, we obtain
\begin{gather}
    \sum\limits_{v \in V(G)}\Big( |DJ_\ell(v)|+ |DJ_r(v)|\Big) \geq     \sum_{w\in V(G)} \Big( \frac{\deg w^2}{2}-\alpha_\ell(w)\deg w + \frac{\alpha_\ell(w)(\alpha_\ell(w)-1)}{2}+
    \\
    +\frac{\deg w^2}{2}-\alpha_r(w)\deg w + \frac{\alpha_r(w)(\alpha_r(w)-1)}{2}\Big)-2(e-n)+\sum_{v\in V(G)}\big(\delta_\ell(v)+\delta_r(v)\big)\label{lastsum}
\end{gather}
By~\eqref{equation:sumofalpha} and~\eqref{equation:sumofdegrees}, we have
\begin{gather}
    \sum_{w\in V(G)} \Big( \frac{\deg w^2}{2}-\alpha_\ell(w)\deg w + \frac{\alpha_\ell(w)(\alpha_\ell(w)-1)}{2}\Big)\\
= \sum_{w\in V(G)}\frac{(\deg w - \alpha_\ell(w))^2}{2} -\frac{n}{2}
    \geq \frac{(2e-n)^2}{2(n-n_\ell)} - \frac{n}{2},
\end{gather}
where to prove the last inequality, we use that the number of vanishing terms \(\deg w-\alpha_\ell(w)\) equals \(n_\ell\).

Analogously, we show that
\[
    \sum_{w\in V(G)} \Big( \frac{\deg w^2}{2}-\alpha_r(w)\deg w + \frac{\alpha_r(w)(\alpha_r(w)-1)}{2}\Big)\geq \frac{(2e-n)^2}{2(n-n_r)} - \frac{n}{2}.
\]
Substituting these two inequalities in \eqref{lastsum}, we finish the proof of the corollary.
\end{proof}

\begin{corollary}\label{theorem1corollary} The following inequality holds
\[
\sum\limits_{v \in V(G)}\Big( |DJ(v\ell_v)| + |DJ(vr_v)|\Big) \geq n\left(\frac{2e}{n}-1\right)\left(\frac{2e}{n}-2\right).
\]
\end{corollary}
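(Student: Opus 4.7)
The plan is to derive this corollary as an almost immediate consequence of Corollary~\ref{maincorollary} and the trivial inequalities \(|DJ(v\ell_v)| \geq |DJ_\ell(v)|\) and \(|DJ(vr_v)| \geq |DJ_r(v)|\) recorded in the preliminaries. First, summing those two inequalities over all \(v \in V(G)\) reduces the problem to establishing the claimed lower bound for the sum \(\sum_{v}(|DJ_\ell(v)| + |DJ_r(v)|)\), which is precisely the quantity that Corollary~\ref{maincorollary} controls.

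Next, I would apply Corollary~\ref{maincorollary} and massage its right-hand side. Since \(\delta_\ell(v), \delta_r(v) \geq 0\), the sum \(\sum_v(\delta_\ell(v)+\delta_r(v))\) can simply be dropped. For the two fractional terms, I observe that \(n_\ell, n_r \geq 0\), so \(n - n_\ell \leq n\) and \(n - n_r \leq n\); replacing each denominator by \(n\) only decreases each fraction. This yields the clean estimate
\[
\sum_{v \in V(G)}\bigl( |DJ_\ell(v)| + |DJ_r(v)|\bigr) \geq \frac{(2e-n)^2}{n} - (2e-n).
\]

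Finally, a short algebraic manipulation finishes the job: factoring out \((2e-n)/n\) on the right-hand side rewrites the bound as \(\frac{(2e-n)(2e-2n)}{n}\), which is exactly \(n\bigl(\tfrac{2e}{n}-1\bigr)\bigl(\tfrac{2e}{n}-2\bigr)\). There is essentially no obstacle here; the only minor subtlety worth flagging is the degenerate case in which \(n_\ell = n\) or \(n_r = n\), but then one checks directly that \(2e = n\) and both sides of the inequality vanish, so the statement holds trivially and the argument above still applies by the natural convention for the indeterminate fractions in Corollary~\ref{maincorollary}.
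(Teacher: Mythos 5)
Your proposal is correct and follows essentially the same route as the paper: sum the trivial inequalities \(|DJ(v\ell_v)|\geq|DJ_\ell(v)|\), \(|DJ(vr_v)|\geq|DJ_r(v)|\), apply \cref{maincorollary}, drop the nonnegative \(\delta\)-terms, bound the denominators \(n-n_\ell, n-n_r\) by \(n\), and factor. The only small remark is that the degenerate case \(n_\ell=n\) or \(n_r=n\) cannot occur under the standing assumption of \cref{section:preliminaries} that every vertex has degree at least two, so no special convention is needed.
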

\begin{proof}
By the definitions of \( DJ_r(v) \) and \( DJ_\ell(v)\), we have \( |DJ(v\ell_v)| \geq |DJ_\ell(v)| \) and \( |DJ(vr_v)| \geq |DJ_r(v)|\), and therefore, Corollary~\ref{maincorollary} yields
\begin{gather}
    \sum\limits_{v \in V(G)}\Big( |DJ(v\ell_v)| + |DJ(vr_v)|\Big) \geq
\frac{(2 e-n)^2}{n} - (2e - n)= n\left(\frac{2e}{n}-1\right)\left(\frac{2e}{n}-2\right),
\end{gather}
which finishes the proof.

\end{proof}
\section{Proof of Theorem~\ref{theorem:main1}}
\label{section:proof1}

Consider two possible cases. 

\textit{Case 1.} Let \( G \) be a pointed graph. Since \( (\sqrt{1+8m}+3)/4\geq 1\), we may assume that \( G \) does not contain vertices of degree 0 or 1; otherwise, we can delete such a vertex and use induction on the number of vertices. Hence we can apply the results of Section~\ref{section:preliminaries}. Combining Corollary~\ref{theorem1corollary} together with \( |DJ(v\ell_v)| \leq m\) and \( |DJ(vr_v)| \leq m \), we obtain
\[
    2mn \geq n\left(\frac{2e}{n}-1\right)\left(\frac{2e}{n}-2\right)=n\left(\left(\frac{2e}{n} -\frac32\right)^2 - \frac14\right),
\]
which finishes the proof of the first case.

\textit{Case 2.} Let \( G \) have a cyclic vertex \( v \). Hence, there are vertices \( v_1, v_2, v_3 \in N(v) \) such that \( v \) lies in the convex hull of \( v_1, v_2,\) and \( v_3 \). Remark that any edge that is not incident to \( v \) is disjoint from at least one of the edges \( vv_1, vv_2, vv_3 \). Since each of the edges \( vv_1, vv_2, \) and \( vv_3\) is disjoint from at most \( m \) edges, we have that there are at most \( 3m + \deg v \) edges in \( G \). The inequality \( \deg v < n \) finishes the proof of the second case.

\section{Proof of Theorem~\ref{theorem:main2}}
\label{section:proof2}

The proof is by induction on \( e \). Since we assume \( e \geq n/2\), we have \(d(G)\geq 1\).

\textit{Base of induction.} Assume that \( e \leq 3n/2 \), and thus, \( d(G)=2e/n\leq 3\).

If \(1\leq d(G) \leq 2\), the desired inequality trivially follows from \(\binom{d(G)}{3}\leq 0\).

If \(2<d(G)\leq 3\), then consider a graph \( G' \) obtained from \( G \) by deleting one edge in every pair of disjoint edges. Thus all edges in \(G'\) are pairwise intersecting. By Theorem~\ref{erdos}, we have \( |E(G')| \leq n \). Since we delete at most \(|DJ(G)|\) edges, we obtain
\[
    |DJ(G)| \geq  |E(G)| - |E(G')| \geq e-n = \frac{n(d(G)-2)}{2} \geq \frac{n}{2} \binom{d(G)}{3}.
\]

\textit{Induction step.} Assume that \( e > 3n/2 \), and thus, \(d(G)>3\).

First, we show that we may assume that \( G \) does not contain vertices of degree 0 or~1. Indeed, let 
\[
    F(G) := \frac{|V(G)|}{2} \binom{d(G)}{3}
\]
and suppose that \( G \) has a vertex of degree 0 or 1. Let \( G' \) be a graph obtained from \( G \) by removing this vertex. Then \(|DJ(G)| \geq |DJ(G')|\) and 
\begin{gather}
    F(G') \geq \frac{n-1}{2} \binom{\frac{2e-2}{n-1}}{3} =
    \frac{2e-2n}{12}  \left(\frac{2e-2}{n-1}\right)\left(\frac{2e-n-1}{n-1}\right) \\
    \overset{(*)}{>} 
    \frac{2e-2n}{12}  \left(\frac{2e-2}{n-1}\right)\left(\frac{2e-n-1}{n-1}\right) \left(\frac{e(n-1)}{(e-1)n}\right)\left(\frac{(2e-n)(n-1)}{(2e-n-1)n}\right)\\
    =\frac{2e(2e-n)(2e-2n)}{12n^2} = F(G).
\end{gather}
Here in \( (*) \) we use that 
\[
\frac{e(n-1)}{(e-1)n} < 1 \text{  and  } 
\frac{(2e-n)(n-1)}{(2e-n-1)n} < 1.
\]
Both of these inequalities easily follow from \( e > n > 1 \). Since \(d(G')>d(G)>3\), we may apply induction on the number of vertices for $G'$ and obtain the desired inequality
\[
    |DJ(G)|\geq |DJ(G')|\geq F(G')\geq F(G).
\]
Therefore, without loss of generality we assume that each vertex of \( G \) has degree at least 2, and thus, we can use the results of Section~\ref{section:preliminaries}.
\bigskip

Finally, we are almost ready to delete from \( G \) either all leftmost or all rightmost edges and then apply the induction hypothesis to the obtained graph.

By  Corollary~\ref{maincorollary}, we may assume without loss of generality that
\[
\label{equation of corollary}
\sum\limits_{v \in V(G)}\ncrlsetcard{v} \geq \frac{(2 e-n)^2}{2(n -  n_\ell) } -e +\frac{n}{2} + \sum_{v\in V(G)}\delta_\ell(v).
\]
Otherwise, if this inequality does not hold, then Corollary~\ref{maincorollary} yields the analogous inequality for the $r$-summands.

Let \(G'\) be a graph obtained from \(G\) by deleting all leftmost edges and also all vertices \(v\in V(G)\) with \(\alpha_\ell(v)=\deg v\) (as they become vertices of degree 0); see Figure~\ref{figure: delete}. Let us find the numbers of vertices and edges in the new graph. Since \(n_\ell \) is the number of deleted vertices, we have 
\[
    |V(G')|=n-n_\ell.
\] 

\begin{figure}[h]
    \begin{subfigure}[b]{0.45\textwidth}
	
		\begin{tikzpicture}[line cap=round,line width = 1pt, x = 1cm, y = 1cm]
		    \def \rad{3};
			\coordinate (b) at (0:\rad);
			\coordinate (c) at (-36:\rad);
			\coordinate (d) at (-72:\rad);
			\coordinate (e) at (-108:\rad);
			\coordinate (f) at (-144:\rad);
			\coordinate (g) at (-180:\rad);
			\coordinate (h) at (-216:\rad);
			\coordinate (i) at (-245:\rad);
			\coordinate (j) at (-288:\rad);
			\coordinate (a) at (-324:\rad);
			
			\filldraw (a) circle (2pt);
			\filldraw (b) circle (2pt);
			\filldraw (c) circle (2pt);
			\filldraw (d) circle (2pt);
			\filldraw (e) circle (2pt);
			\filldraw (f) circle (2pt);
			\filldraw (g) circle (2pt);
			\filldraw (h) circle (2pt);
			\filldraw (i) circle (2pt);
			\filldraw (j) circle (2pt);
			
			\draw (a) node  [above right]{\(a\)};
			\draw (b) node  [right]{\(b\)};
			\draw (c) node  [below right]{\(c\)};
			\draw (d) node  [below]{\(d\)};
			\draw (e) node  [below left]{\(f\)};
			\draw (f) node  [below left]{\(g\)};
			\draw (g) node  [above left]{\(h\)};
			\draw (h) node  [above]{\(i\)};
			\draw (i) node  [above]{\(j\)};
			\draw (j) node  [above]{\(k\)};
			
			\draw [red, -{Stealth[length=10pt, width=3pt]}](a) -- (c);
			\draw [red, -{Stealth[length=10pt, width=3pt]}](e) -- (a);
			\draw [red, -{Stealth[length=10pt, width=3pt]}](g) -- (a);
			\draw [red, -{Stealth[length=10pt, width=3pt]}](j) -- (b);
			\draw [red, dotted,{Stealth[length=10pt, width=3pt]}-{Stealth[length=10pt, width=3pt]}](b) -- (h);
			\draw [red, -{Stealth[length=10pt, width=3pt]}](c) -- (h);
			\draw [red, -{Stealth[length=10pt, width=3pt]}](d) -- (e);
			\draw [red, -{Stealth[length=10pt, width=3pt]}](f) -- (g);
			\draw [red, -{Stealth[length=10pt, width=3pt]}](i) -- (j);
			
			\draw (b) -- (i);
			\draw (d) -- (f);
			\draw (d) -- (g);
			\draw (a) -- (f);
			\draw (f) -- (j);
			\draw (c) -- (j);
			\draw (c) -- (i);

		\end{tikzpicture}
		\caption{Graph \(G\)}
		\end{subfigure}
		\begin{subfigure}[b]{0.45\textwidth}
			\begin{tikzpicture}[line cap=round,line width = 1pt, x = 1cm, y = 1cm]
		    \def \rad{3};
			\coordinate (b) at (0:\rad);
			\coordinate (c) at (-36:\rad);
			\coordinate (d) at (-72:\rad);
			\coordinate (e) at (-108:\rad);
			\coordinate (f) at (-144:\rad);
			\coordinate (g) at (-180:\rad);
			\coordinate (h) at (-216:\rad);
			\coordinate (i) at (-245:\rad);
			\coordinate (j) at (-288:\rad);
			\coordinate (a) at (-324:\rad);
			
			\filldraw (a) circle (2pt);
			\filldraw (b) circle (2pt);
			\filldraw (c) circle (2pt);
			\filldraw (d) circle (2pt);
			\filldraw (e) circle (2pt);
			\filldraw (f) circle (2pt);
			\filldraw (g) circle (2pt);
			\filldraw (i) circle (2pt);
			\filldraw (j) circle (2pt);
			
			\draw (a) node  [above right]{\(a\)};
			\draw (b) node  [right]{\(b\)};
			\draw (c) node  [below right]{\(c\)};
			\draw (d) node  [below]{\(d\)};
			\draw (e) node  [below left]{\(f\)};
			\draw (f) node  [below left]{\(g\)};
			\draw (g) node  [above left]{\(h\)};
			\draw (i) node  [above]{\(j\)};
			\draw (j) node  [above]{\(k\)};

		    \draw (b) -- (i);
			\draw (d) -- (f);
			\draw (d) -- (g);
			\draw (a) -- (f);
			\draw (f) -- (j);
			\draw (c) -- (j);
			\draw (c) -- (i);
			\end{tikzpicture}
			\caption{Graph \(G'\)}
		\end{subfigure}
	
	\caption{Here the leftmost edges are drawn as red arrows: If the red arrow connects $x$ to $y$, then $y$ is the leftmost neighbour of $x$. The dashed edge $ib$ is a double leftmost edge. Remark that we delete the vertex $i$ and at the same time do not delete~$f$ because it is not the leftmost neighbour for $a$.}
\label{figure: delete}	
\end{figure}
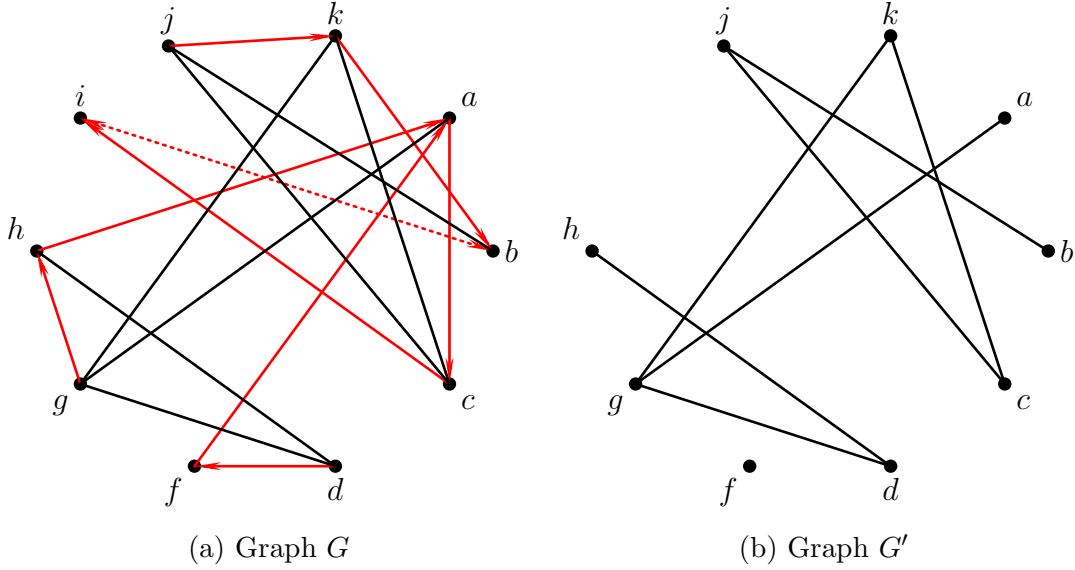

Denote by $t_\ell$ the number of \textit{double leftmost edges}, where an edge \(uv\) is called \textit{double leftmost} if leftmost with respect to both its endpoints \(u\) and \(v\), that is, \(u=\ell_v\) and \(v=\ell_u\). Since we delete each leftmost edge (and can not delete such an edge twice), we obtain 
\[
|E(G')|=e-n+t_\ell.
\]
\medskip 

First, we claim that the following inequality holds
\begin{equation}
\label{equation:djg}
|DJ(G)| \geq |DJ(G')| + \sum\limits_{v \in  V(G)} |DJ_\ell(v)| - |E(G')|.
\end{equation}
To prove this inequality, we show that each pair of disjoint edges in $G$ counted on the right-hand side of this inequality at most once. Indeed, there are three types of disjoint pairs of edges in $G$. 
\begin{itemize}
    \item[1.] Both edges are not leftmost. Then this pair belongs to $DJ(G')$.
    \item[2.] One of the edges is leftmost and the second one is not. Then we count such a pair at most once in the sum $\sum_{v\in V(G)}|DJ_\ell(v)|$. Recall that by $DJ_\ell(v)$, we denote edges incident to $N(v)$ and disjoint from $v\ell_v$.
    \item[3.] Both edges are leftmost. Denote them by $v\ell_v$ and $u\ell_u$. We can count their pair in the sum $\sum_{v\in V(G)}|DJ_\ell(v)|$ only in the case when $u$ and $v$ are neighbours in $G$ and the edge $uv\in E(G)$ is distinct from them.  Under this assumptions, we count this pair twice when consider the summands $|DJ_\ell(v)|$ and $|DJ_\ell(u)|$. However, in this case the edge $uv$ is not leftmost for $G$ and thus belongs to $G'$. As each edge $xy\in E(G')$ corresponds to such a pair of disjoint edges in $G$ and we subtract $|E(G')|$ on the right-hand side, we can say that any pair of disjoint leftmost edges is counted at most one there.
\end{itemize}
\medskip 

Second, for any double leftmost edge \(uv\in E(G)\) we have
\begin{equation}
\label{equality for double left edges}
\delta_\ell(v)+\beta_\ell(u)= 1,
\end{equation}
where for every \(w\in V(G)\) put \(\beta_\ell(w)=1\) if \(\deg w = \alpha_\ell(w)\), otherwise, \(\beta_\ell(w)=0\). Remark that if $\beta_\ell(w)=1$ then the edge $w\ell_w$ is a double leftmost edge. Recall that $\delta_\ell(w)=1$ if there is at least one edge $\ell_x y\in E(G)$ such that $\ell_x\ne \ell_y$ and the ray $\ell_x y$ shares with the affine cone $x+\cone\{\ell_x-x, r_x-x\}$ only the point $\ell_x$.  Otherwise, $\delta_\ell(x)=0$. 

To prove~(\ref{equality for double left edges}), we consider two possible cases. \begin{itemize}
    \item[1.] The vertex \(u\) is the leftmost vertex with respect to all its neighbours, and hence, $\beta_\ell(u)=1$ and  \(\delta_\ell(v)=0\).
    \item [2.] There is an edge $uw$ that is not leftmost with respect to $w$, and hence, $\beta_\ell(u)=0$ and $\delta_\ell(v)=1$.
\end{itemize}

By~\eqref{equality for double left edges}, we easily obtain that the equality
\begin{equation}
    \label{equation strong for double left edges}
    \delta_\ell(u)+\delta_\ell(v)+\beta_\ell(u)+\beta_\ell(v)= 2
\end{equation}
holds for all double leftmost edges $uv$. 

Recall that $n_{\ell}$ is the number of vertices \(w\) with \( \deg (w)= \alpha_\ell(w)\), that is, $n_\ell=\sum_{v\in V(G)} \beta_\ell(v)$.
Summing up \eqref{equation strong for double left edges} over all double leftmost edges in \(G\) and using that each vertex can be incident to at most one double leftmost edge, we easily obtain 
\[
\label{equation:the new inequality}
    \sum_{v\in V(G)}\delta_\ell(v)+\sum_{v\in V(G)}\beta_\ell(v)= \sum_{v\in V(G)}\delta_\ell(v)+n_\ell \geq 2t_\ell\geq n_\ell=\sum_{v\in V(G)} \beta_{\ell}(v),
\]
where $t_\ell$ is the number of double leftmost edges.
\medskip

Third, by~\eqref{equation:the new inequality}, we have $n-2t_\ell\leq n-n_\ell$, and thus, 
\[
\label{equation:newavarage}
    d(G') = \frac{2(e-n+t_\ell)}{n -  n_\ell}\geq \frac{2e-n}{n-n_\ell}-1\geq \frac{2e-n}{n} -1 =d(G)-2> 1,
\]
and thus, we can apply the induction hypothesis for \(G'\).
\medskip

By the induction hypothesis, \eqref{equation:djg}, \eqref{equation:newavarage}, and \eqref{equation of corollary}, we obtain
\[
|DJ(G)|\geq \frac{(n -  n_\ell)}{2} \cdot\binom{\frac{2e-n}{n-n_\ell} - 1}{3} + \frac{(2e-n)^2}{2(n -  n_\ell)} -e+\frac{n}{2}+\sum_{v\in V(G)}\delta_\ell(v)- e + n - t_\ell.
\]

Finally, by~\eqref{equation:the new inequality}, we have 
\[
    \sum_{v\in V(G)} \delta_\ell(v)-t_\ell\geq \frac{1}{2}\sum_{v\in V(G)} \delta_\ell(v)-t_\ell\geq \frac{n_\ell}{2},
\]
and thus,
\begin{align}
|DJ(G)|&\geq 
\frac{n -  n_\ell}{2} \cdot\binom{\frac{2e-n}{n-n_\ell} - 1}{3} + \frac{(2e-n)^2}{2(n -  n_\ell) } - (2e-n) + \frac{n-n_\ell}{2}
\\ 
&=\frac{n-n_\ell}{2}\cdot \Bigg(\binom{\frac{2e-n}{n-n_\ell}-1}{3}+\left(\frac{2e-n}{n-n_\ell}-1\right)^2\Bigg)\\
&=\frac{2e-n}{12}\cdot \Bigg(\left(\frac{2e-n}{n-n_\ell}\right)^2-1\Bigg)\\
&\geq
\frac{2e-n}{12}\cdot \bigg(\left(\frac{2e-n}{n}\right)^2-1\bigg)=\frac{n}{2}\binom{\frac{2e}{n}}{3},
\end{align}
which finishes the proof of the theorem.

\section{Discussion}
\label{section:discussion}

\subsection{Tightness of the theorems.} 
\label{subsection:tightness}
Let \( n, k \) be integers of different parity such that \( n-2 > k>2 \). Let \(G_{n,k}\) be a convex graph whose vertices are denoted \(x_1,\dots, x_n\) in cyclic order and edges are \(x_{i}x_{j}\) for \(j-i\equiv\frac{n-k-1}{2},\dots,\frac{n+k+1}{2} (\mathrm{mod}\ n)\); see Figure \ref{figure:example}. 

It is not difficult to verify that the number of edges of \(G_{n,k}\) is \(n(k+2)/2\). Moreover,
 \[
 |DJ(G_{n,k})| = \frac{n}{2} \cdot \binom{k+2}{3} \text{ and }
 |DJ(uv)| \leq \frac{k(k+1)}{2} \text{ for any edge } uv\in E(G_{n,k}).
 \]
Therefore, for the graph \(G_{n,k}\), the bounds in Theorems~\ref{theorem:main1} and~\ref{theorem:main2} are tight. 

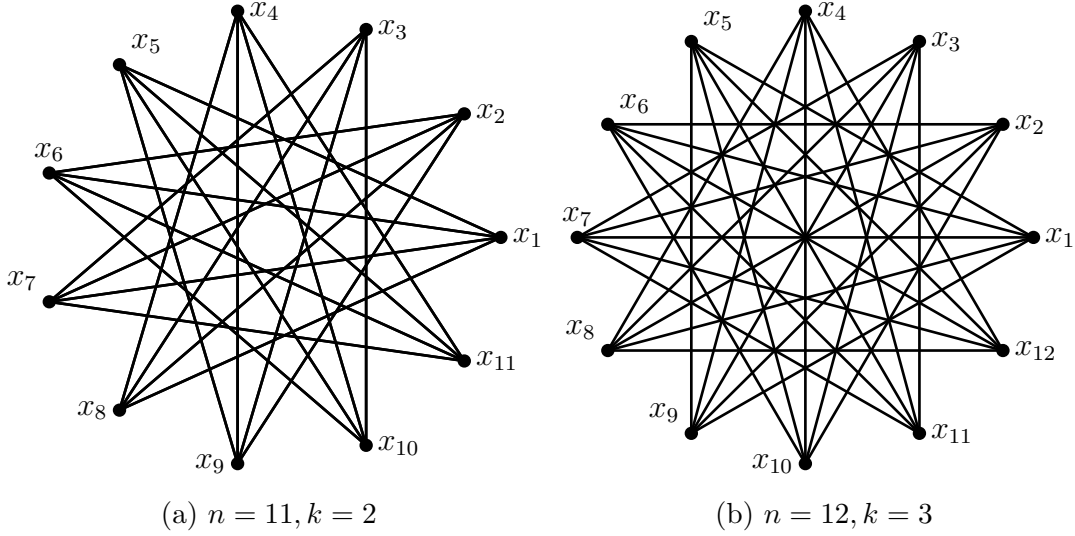
\begin{figure}[h]

\begin{subfigure}[b]{0.45\textwidth}
\begin{tikzpicture}[line cap=round,line width = 1pt, x = 3.03085cm, y = 3.03085cm]
\draw (1.0, 0.0) node  [right]{\(x_1\)};
\draw (0.8412535328311812, 0.5406408174555976)  node  [right]{\( x_2 \)};
\draw (0.41541501300188644, 0.9096319953545183)  node  [right]{\( x_3 \)};
\draw (-0.142314838273285, 0.9898214418809328)  node  [right]{\( x_4 \)};
\draw (-0.654860733945285, 0.7557495743542583)  node  [above right]{\( x_5 \)};
\draw (-0.9594929736144974, 0.28173255684142967)  node  [above]{\( x_6 \)};
\draw (-0.9594929736144975, -0.2817325568414294)  node  [above left]{\( x_7 \)};
\draw (-0.6548607339452852, -0.7557495743542582)  node  [left]{\( x_8 \)};
\draw (-0.14231483827328523, -0.9898214418809327)  node  [left]{\( x_9 \)};
\draw (0.41541501300188605, -0.9096319953545186)  node  [right]{\(x_{10}\)};
\draw (0.8412535328311812, -0.5406408174555974)  node  [right]{\( x_{11}\)};

\filldraw (1.0, 0.0) circle (2pt);
\filldraw (0.8412535328311812, 0.5406408174555976)  circle (2pt);
\filldraw (0.41541501300188644, 0.9096319953545183) circle (2pt);
\filldraw (-0.142314838273285, 0.9898214418809328)  circle (2pt);
\filldraw (-0.654860733945285, 0.7557495743542583)  circle (2pt);
\filldraw (-0.9594929736144974, 0.28173255684142967)  circle (2pt);
\filldraw (-0.9594929736144975, -0.2817325568414294)  circle (2pt);
\filldraw (-0.6548607339452852, -0.7557495743542582)  circle (2pt);
\filldraw (-0.14231483827328523, -0.9898214418809327)  circle (2pt);
\filldraw (0.41541501300188605, -0.9096319953545186)  circle (2pt);
\filldraw (0.8412535328311812, -0.5406408174555974)  circle (2pt);

\draw (1.0, 0.0)--(-0.654860733945285, 0.7557495743542583);
\draw (1.0, 0.0)--(-0.9594929736144974, 0.28173255684142967);
\draw (1.0, 0.0)--(-0.9594929736144975, -0.2817325568414294);
\draw (1.0, 0.0)--(-0.6548607339452852, -0.7557495743542582);
\draw (0.8412535328311812, 0.5406408174555976)--(-0.9594929736144974, 0.28173255684142967);
\draw (0.8412535328311812, 0.5406408174555976)--(-0.9594929736144975, -0.2817325568414294);
\draw (0.8412535328311812, 0.5406408174555976)--(-0.6548607339452852, -0.7557495743542582);
\draw (0.8412535328311812, 0.5406408174555976)--(-0.14231483827328523, -0.9898214418809327);
\draw (0.41541501300188644, 0.9096319953545183)--(-0.9594929736144975, -0.2817325568414294);
\draw (0.41541501300188644, 0.9096319953545183)--(-0.6548607339452852, -0.7557495743542582);
\draw (0.41541501300188644, 0.9096319953545183)--(-0.14231483827328523, -0.9898214418809327);
\draw (0.41541501300188644, 0.9096319953545183)--(0.41541501300188605, -0.9096319953545186);
\draw (-0.142314838273285, 0.9898214418809328)--(-0.6548607339452852, -0.7557495743542582);
\draw (-0.142314838273285, 0.9898214418809328)--(-0.14231483827328523, -0.9898214418809327);
\draw (-0.142314838273285, 0.9898214418809328)--(0.41541501300188605, -0.9096319953545186);
\draw (-0.142314838273285, 0.9898214418809328)--(0.8412535328311812, -0.5406408174555974);
\draw (-0.654860733945285, 0.7557495743542583)--(1.0, 0.0);
\draw (-0.654860733945285, 0.7557495743542583)--(-0.14231483827328523, -0.9898214418809327);
\draw (-0.654860733945285, 0.7557495743542583)--(0.41541501300188605, -0.9096319953545186);
\draw (-0.654860733945285, 0.7557495743542583)--(0.8412535328311812, -0.5406408174555974);
\draw (-0.9594929736144974, 0.28173255684142967)--(1.0, 0.0);
\draw (-0.9594929736144974, 0.28173255684142967)--(0.8412535328311812, 0.5406408174555976);
\draw (-0.9594929736144974, 0.28173255684142967)--(0.41541501300188605, -0.9096319953545186);
\draw (-0.9594929736144974, 0.28173255684142967)--(0.8412535328311812, -0.5406408174555974);
\draw (-0.9594929736144975, -0.2817325568414294)--(1.0, 0.0);
\draw (-0.9594929736144975, -0.2817325568414294)--(0.8412535328311812, 0.5406408174555976);
\draw (-0.9594929736144975, -0.2817325568414294)--(0.41541501300188644, 0.9096319953545183);
\draw (-0.9594929736144975, -0.2817325568414294)--(0.8412535328311812, -0.5406408174555974);
\draw (-0.6548607339452852, -0.7557495743542582)--(1.0, 0.0);
\draw (-0.6548607339452852, -0.7557495743542582)--(0.8412535328311812, 0.5406408174555976);
\draw (-0.6548607339452852, -0.7557495743542582)--(0.41541501300188644, 0.9096319953545183);
\draw (-0.6548607339452852, -0.7557495743542582)--(-0.142314838273285, 0.9898214418809328);
\draw (-0.14231483827328523, -0.9898214418809327)--(0.8412535328311812, 0.5406408174555976);
\draw (-0.14231483827328523, -0.9898214418809327)--(0.41541501300188644, 0.9096319953545183);
\draw (-0.14231483827328523, -0.9898214418809327)--(-0.142314838273285, 0.9898214418809328);
\draw (-0.14231483827328523, -0.9898214418809327)--(-0.654860733945285, 0.7557495743542583);
\draw (0.41541501300188605, -0.9096319953545186)--(0.41541501300188644, 0.9096319953545183);
\draw (0.41541501300188605, -0.9096319953545186)--(-0.142314838273285, 0.9898214418809328);
\draw (0.41541501300188605, -0.9096319953545186)--(-0.654860733945285, 0.7557495743542583);
\draw (0.41541501300188605, -0.9096319953545186)--(-0.9594929736144974, 0.28173255684142967);
\draw (0.8412535328311812, -0.5406408174555974)--(-0.142314838273285, 0.9898214418809328);
\draw (0.8412535328311812, -0.5406408174555974)--(-0.654860733945285, 0.7557495743542583);
\draw (0.8412535328311812, -0.5406408174555974)--(-0.9594929736144974, 0.28173255684142967);
\draw (0.8412535328311812, -0.5406408174555974)--(-0.9594929736144975, -0.2817325568414294);
\end{tikzpicture}
\subcaption{\( n = 11, k = 2\)}
\end{subfigure}
\begin{subfigure}[b]{0.45\textwidth}
\begin{tikzpicture}[line cap=round,line width = 1pt, x=3cm,y=3cm]

\draw (1.0, 0.0) node [right] {\( x_1 \)};
\draw (0.8660254038, 0.5) node [right] {\( x_2\)};
\draw (0.5, 0.8660254038) node [right] {\(x_3\)};
\draw (0.0, 1.0) node [right] {\( x_4 \)};
\draw (-0.5, 0.8660254038) node [above right] {\( x_5 \)};
\draw (-0.8660254038, 0.5) node [above right] {\( x_6 \)};
\draw (-1.0, 0.0) node [above] {\( x_7 \)};
\draw (-0.8660254038, -0.5) node [above left] {\( x_8 \)};
\draw (-0.5, -0.8660254038) node [above left] {\( x_9 \)};
\draw (0.0, -1.0) node [left] {\( x_{10} \)};
\draw (0.5, -0.8660254038) node [right] {\( x_{11} \)};
\draw (0.8660254038, -0.5) node [right] {\( x_{12} \)};

\filldraw (1.0, 0.0) circle (2pt);
\filldraw (0.8660254038, 0.5) circle (2pt);
\filldraw (0.5, 0.8660254038) circle (2pt);
\filldraw (0.0, 1.0) circle (2pt);
\filldraw (-0.5, 0.8660254038) circle (2pt);
\filldraw (-0.8660254038, 0.5) circle (2pt);
\filldraw (-1.0, 0.0) circle (2pt);
\filldraw (-0.8660254038, -0.5) circle (2pt);
\filldraw (-0.5, -0.8660254038) circle (2pt);
\filldraw (0.0, -1.0) circle (2pt);
\filldraw (0.5, -0.8660254038) circle (2pt);
\filldraw (0.8660254038, -0.5) circle (2pt);

\draw (1.0 ,  0.0 ) -- ( -0.5 ,  0.8660254038 );
\draw ( 1.0 ,  0.0 ) -- ( -0.8660254038 ,  0.5 );
\draw ( 1.0 ,  0.0 ) -- ( -1.0 ,  0.0 );
\draw ( 1.0 ,  0.0 ) -- ( -0.8660254038 ,  -0.5 );
\draw ( 1.0 ,  0.0 ) -- ( -0.5 ,  -0.8660254038 );
\draw ( 0.8660254038 ,  0.5 ) -- ( -0.8660254038 ,  0.5 );
\draw ( 0.8660254038 ,  0.5 ) -- ( -1.0 ,  0.0 );
\draw ( 0.8660254038 ,  0.5 ) -- ( -0.8660254038 ,  -0.5 );
\draw ( 0.8660254038 ,  0.5 ) -- ( -0.5 ,  -0.8660254038 );
\draw ( 0.8660254038 ,  0.5 ) -- ( 0.0 ,  -1.0 );
\draw ( 0.5 ,  0.8660254038 ) -- ( -1.0 ,  0.0 );
\draw ( 0.5 ,  0.8660254038 ) -- ( -0.8660254038 ,  -0.5 );
\draw ( 0.5 ,  0.8660254038 ) -- ( -0.5 ,  -0.8660254038 );
\draw ( 0.5 ,  0.8660254038 ) -- ( 0.0 ,  -1.0 );
\draw ( 0.5 ,  0.8660254038 ) -- ( 0.5 ,  -0.8660254038 );
\draw ( 0.0 ,  1.0 ) -- ( -0.8660254038 ,  -0.5 );
\draw ( 0.0 ,  1.0 ) -- ( -0.5 ,  -0.8660254038 );
\draw ( 0.0 ,  1.0 ) -- ( 0.0 ,  -1.0 );
\draw ( 0.0 ,  1.0 ) -- ( 0.5 ,  -0.8660254038 );
\draw ( 0.0 ,  1.0 ) -- ( 0.8660254038 ,  -0.5 );
\draw ( -0.5 ,  0.8660254038 ) -- ( -0.5 ,  -0.8660254038 );
\draw ( -0.5 ,  0.8660254038 ) -- ( 0.0 ,  -1.0 );
\draw ( -0.5 ,  0.8660254038 ) -- ( 0.5 ,  -0.8660254038 );
\draw ( -0.5 ,  0.8660254038 ) -- ( 0.8660254038 ,  -0.5 );
\draw ( -0.8660254038 ,  0.5 ) -- ( 0.0 ,  -1.0 );
\draw ( -0.8660254038 ,  0.5 ) -- ( 0.5 ,  -0.8660254038 );
\draw ( -0.8660254038 ,  0.5 ) -- ( 0.8660254038 ,  -0.5 );
\draw ( -1.0 ,  0.0 ) -- ( 0.5 ,  -0.8660254038 );
\draw ( -1.0 ,  0.0 ) -- ( 0.8660254038 ,  -0.5 );
\draw ( -0.8660254038 ,  -0.5 ) -- ( 0.8660254038 ,  -0.5 );

\end{tikzpicture}
\subcaption{\( n = 12, k = 3 \)}
\end{subfigure}
\caption{Graph \(G_{n,k}\).}
\label{figure:example}
\end{figure}
\subsection{Open problems and conjectures.}
Unfortunately, Theorem~\ref{theorem:main2} becomes wrong if one replaces the bound
\[
\frac{n}{2}\binom{d(G)}{3}\quad \text{by}\quad
\frac{1}{2}\sum_{v \in V(G)}\binom{\deg v}{3}.
\]
Indeed, a star \(S\) on \( n + 1 \) vertices satisfies the following inequality
\[
\frac{1}{2}\sum_{v \in V(S)}\binom{\deg v}{3} = \binom{n}{3} > 0 = DJ(S).
\]

We believe that the following conjectures hold.
\begin{conjecture}
\label{conjecture:main}
Let \(m\) be a non-negative integer and \(G\) be a geometric graph such that \( |DJ(uv)|\leq m\) for any edge \( uv\in E(G) \). Then
\[ 
    |E(G)|\leq \frac{\sqrt{1+8m}+3}{4}\cdot |V(G)|.
\]
\end{conjecture}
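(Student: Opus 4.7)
The plan is to prove the bound by induction on $|V(G)|$, reducing whenever possible to the locally convex case already handled by \cref{theorem:main1}. The case $m=0$ is \cref{erdos}, so assume $m\geq 1$. Write $\alpha:=(\sqrt{1+8m}+3)/4$ for the conjectured slope; note $\alpha\geq 3/2$. As a preliminary reduction, any vertex of $G$ of degree $0$ or $1$ may be deleted: the smaller graph still satisfies the hypothesis, and the induction bound is preserved because $\alpha\geq 1$. So we may assume every vertex has degree at least $2$. If $G$ is locally convex, Case~1 of the proof of \cref{theorem:main1} directly yields $|E(G)|\leq |V(G)|\alpha$ and we are done.

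The hard case is when $G$ has a non-convex vertex $v$, with witnesses $v_1,v_2,v_3\in N(v)$ and $v\in \conv(v_1,v_2,v_3)$. As in Case~2 of the proof of \cref{theorem:main1}, every edge not incident to $v$ is disjoint from at least one of $vv_1,vv_2,vv_3$, so $|E(G)|\leq 3m+\deg v\leq 3m+n-1$ with $n:=|V(G)|$; equivalently $\deg v\geq |E(G)|-3m$, so any non-convex vertex has very large degree. Two easy sub-cases reduce the problem to a bounded range of $n$. The trivial inequality $|E(G)|\leq \binom{n}{2}$ gives $|E(G)|\leq n\alpha$ whenever $n\leq (\sqrt{1+8m}+5)/2$, and the bound $|E(G)|\leq 3m+n-1$ gives $|E(G)|\leq n\alpha$ whenever $n\geq (3m-1)/(\alpha-1)$. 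The leftover middle range consists of graphs with a non-convex vertex and $n=\Theta(\sqrt{m})$.

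The main obstacle is this intermediate regime. The approach I would take is to extend the discharging machinery of Section~\ref{section:preliminaries} to arbitrary geometric graphs, with the following modification: at a non-convex vertex $v$ one selects the triple $\{vv_1,vv_2,vv_3\}$ in place of the pair $\{v\ell_v,vr_v\}$, and redistributes charges so that each non-incident edge is charged according to how many of the three selected edges it is disjoint from. The hardest step will be to redo \cref{lemma:simple?} and the chain \cref{corollary:summation,maincorollary,theorem1corollary} so that the global summation still implies $(d-1)(d-2)\leq 2m$ for $d=2e/n$; the high-degree property of non-convex vertices noted above should help keep the extra three-fold charges controllable. A more indirect route is to perturb each non-convex vertex slightly outside the convex hull of its neighborhood to obtain a locally convex graph $\widetilde G$ on the same vertex set, and to quantify how much each $|DJ(uw)|$ can grow under such a perturbation; if the growth is mild enough, applying \cref{theorem:main1} to $\widetilde G$ would close the gap.
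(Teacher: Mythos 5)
This statement is an open conjecture in the paper, not a proved theorem: the paper establishes only the weaker \cref{theorem:main1} (with the extra term $|V(G)|+3m-1$ in the maximum) and remarks at the end of Section~\ref{section:discussion} that the conjecture holds once $|V(G)|$ is sufficiently large compared to $\sqrt{m}$. Your reductions reproduce exactly this known territory and are all correct: the degree-$\leq 1$ deletion, the locally convex case via Case~1 of the proof of \cref{theorem:main1}, the bound $|E(G)|\leq 3m+\deg v$ at a non-convex vertex (which gives the conjecture for $n\gtrsim 3\sqrt{2m}$ and is precisely the source of the paper's closing remark), and the trivial bound $\binom{n}{2}$ for $n\lesssim \sqrt{2m}$. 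But the remaining regime --- a graph with a non-convex vertex and $n=\Theta(\sqrt{m})$ --- is exactly the open problem, and for it you offer only a plan, not a proof.

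Both of your proposed routes face concrete obstructions that you do not address. The discharging machinery of Section~\ref{section:preliminaries} is not merely stated for locally convex graphs for convenience: the key step, case~4 in the proof of \cref{lemma:simple?}, asserts that an edge joining two vertices of $N(v)\setminus\{\ell_v,r_v\}$ lies inside the angle $r_v v \ell_v$ and is therefore disjoint from \emph{both} extremal edges, and this uses that $v\notin\conv(N(v))$; at a non-convex vertex there is no pair of extremal edges whose cone contains $N(v)$, and it is not clear that charging against a triple $\{vv_1,vv_2,vv_3\}$ yields a sum that can be closed into the quadratic inequality $(d-1)(d-2)\leq 2m$ --- the factor $3$ instead of $2$ is precisely what produces the unwanted $3m$ term in \cref{theorem:main1}. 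The perturbation route is also unquantified: a vertex deep inside $\conv(N(v))$ must be moved a macroscopic distance to become convex, and such a move can create or destroy an unbounded number of crossings, so there is no a priori control on how much each $|DJ(uw)|$ grows. As written, the proposal identifies the difficulty correctly but does not resolve it; the conjecture remains open.
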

\begin{conjecture}
For any geometric graph \( G \) with \(2|E(G)|\geq |V(G)|\), we have
\[
|DJ(G)| \geq \frac{n}{2} \cdot \binom{d(G)}{3}.
\]
\end{conjecture}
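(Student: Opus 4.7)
The natural plan is induction on the number of edges $e$, using Hopf--Pannwitz (\cref{erdos}) for the base case and the leftmost/rightmost-edge machinery of \cref{section:preliminaries} for the inductive step. Write $F(G) := \tfrac{|V(G)|}{2}\binom{d(G)}{3}$ for the target quantity, and note $F(G) = \tfrac{2e(2e-n)(2e-2n)}{12n^2}$.

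For the base case, assume $d(G) \leq 3$, i.e.\ $e \leq 3n/2$. If $d(G) \leq 2$ then $\binom{d(G)}{3} \leq 0$ and there is nothing to prove. If $2 < d(G) \leq 3$, delete one edge from each pair of disjoint edges to obtain a graph $G'$ whose edges pairwise intersect; \cref{erdos} gives $|E(G')| \leq n$, so $|DJ(G)| \geq e - n = n(d(G)-2)/2$, and an elementary comparison shows this exceeds $(n/2)\binom{d(G)}{3}$ throughout the range $d(G) \in (2,3]$, with equality at $d(G)=3$.

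For the inductive step assume $d(G) > 3$. First I would dispose of vertices of degree at most $1$: removing such a vertex does not decrease $|DJ|$, while a short monotonicity calculation shows $F(G') \geq F(G)$. So assume the minimum degree is $\geq 2$, which unlocks the tools of \cref{section:preliminaries}. Form the reduced graph $G'$ by deleting every leftmost edge and every vertex all of whose incident edges are leftmost (there are $n_\ell$ such vertices), so that $|V(G')| = n - n_\ell$ and $|E(G')| = e - n + t_\ell$, where $t_\ell$ is the number of double-leftmost edges. Establish the pair-counting estimate
\begin{equation*}
|DJ(G)| \;\geq\; |DJ(G')| + \sum_{v \in V(G)} |DJ_\ell(v)| - |N_\ell(G)|,
\end{equation*}
where $N_\ell(G)$ is the set of pairs of leftmost edges $v\ell_v, u\ell_u$ with $uv \in E(G)$, and observe $|N_\ell(G)| = e - n + t_\ell$ by the natural bijection with $E(G')$. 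Then apply induction to $G'$ (verifying en route that $d(G') > 1$) and use \cref{maincorollary} to control $\sum_v |DJ_\ell(v)|$.

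The main obstacle will be closing the algebra. The crucial auxiliary inequality, which I would prove by a short discharging argument over endpoints of double-leftmost edges, is
\begin{equation*}
\sum_{v \in V(G)} \delta_\ell(v) + n_\ell \;\geq\; 2 t_\ell \;\geq\; n_\ell,
\end{equation*}
and it serves precisely to absorb the negative $-t_\ell$ produced by $|N_\ell(G)|$. Combined with the trivial monotonicity $(2e-n)/(n-n_\ell) \geq (2e-n)/n$, this should collapse all substituted expressions into $F(G) = \tfrac{n}{2}\binom{2e/n}{3}$ via the identity $\binom{x-1}{3}+(x-1)^2 = \tfrac{x}{6}(x^2-1)$ taken at $x=(2e-n)/(n-n_\ell)$. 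I expect the delicate step to be this bookkeeping: verifying that the $\delta_\ell$-contributions really do cover $t_\ell$, that $d(G')$ stays above $1$ so the induction hypothesis is applicable, and that the final monotonicity inequality goes in the intended direction.
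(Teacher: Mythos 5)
There is a genuine gap: the statement you are proving is stated for \emph{arbitrary} geometric graphs, but the entire inductive step of your proposal leans on the machinery of \cref{section:preliminaries}, which is developed only under the standing assumption that $G$ is \emph{locally convex} (every vertex lies outside the convex hull of its neighborhood). Reducing to minimum degree $\geq 2$ does not ``unlock'' those tools for a general geometric graph. Concretely, \cref{lemma:simple?} --- and hence \cref{maincorollary}, which you invoke to control $\sum_v |DJ_\ell(v)|$ --- requires that the neighbors of $v$ all lie in an angular sector of opening less than $\pi$ at $v$: this is what guarantees that $\ell_v$ and $r_v$ (defined via the maximum oriented angle) bound all of $N(v)$, and that an edge joining two vertices of $N(v)\setminus\{\ell_v,r_v\}$ stays inside the sector and is therefore disjoint from both $v\ell_v$ and $vr_v$. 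If $v$ is not convex, the maximum oriented angle over pairs of neighbors does not capture all of $N(v)$ in a convex sector, and the discharging cases of \cref{lemma:simple?} simply fail (an edge between two neighbors of $v$ can cross $v\ell_v$ or $vr_v$, or pass on the far side of $v$). This is precisely why the paper proves the bound only for locally convex graphs (\cref{theorem:main2}) and leaves the general case as a conjecture; note also that the proof of \cref{theorem:main1} has to treat the non-convex-vertex case by a separate ad hoc argument, which is why that theorem carries the extra $|V(G)|+3m-1$ term.

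Your base case (via \cref{erdos}) and the removal of degree-$\leq 1$ vertices are fine for general geometric graphs, and the rest of your outline is a faithful reconstruction of the paper's proof of \cref{theorem:main2} for the locally convex case. But as a proof of the stated conjecture it is missing the essential new idea: some way to run (or replace) the leftmost/rightmost-edge counting at non-convex vertices. Without that, the argument only establishes \cref{theorem:main2}, not the conjecture.
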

At last, remark that Theorem~\ref{theorem:main1} implies Conjecture~\ref{conjecture:main} for a geometric graph \( G\) with at least \(\frac{3(\sqrt{1+8m}+1)}{2}\) vertices.

% \section*{Declarations}
% \subsection*{Funding} The research of A.P. was funded by the grant of Russian Science Foundation №21-71-10092, \url{https://rscf.ru/project/21-71-10092/}. A.P. is a Young Russian Mathematics award winner and would like to thank its sponsors and jury.
% \subsection*{Conflict of interests} Not applicable.
% \subsection*{Availability of data and material} Not applicable.
% \subsection*{Code availability} Not applicable. 

\bibliographystyle{amsalpha}
\bibliography{biblio.bib}

\end{document}